\newcommand{\CP}{\mathbb{CP}^{n+1}}
\newcommand{\CN}{\mathbb{C}^{n+1}}
\newcommand{\U}{\mathcal{U}}
\newcommand{\C}{\mathbb{C}}
\newcommand{\Z}{\mathbb{Z}}
\newcommand{\F}{\mathbb{F}}
\newcommand{\Q}{\mathbb{Q}}
\newcommand{\K}{\mathcal{L}}
\newcommand{\R}{\mathcal{R}^{\bullet}}
\newtheorem{thm}{Theorem}[section]
\newtheorem{prop}[thm]{Proposition}
\newtheorem{lem}[thm]{Lemma}
\newtheorem{cor}[thm]{Corollary}
\theoremstyle{definition}
\newtheorem{definition}[thm]{Definition}
\newtheorem{example}[thm]{Example}
\theoremstyle{remark}
\newtheorem{remark}[thm]{Remark}
\numberwithin{equation}{section}
\newcommand{\MHM}{{\rm MHM}}
\def\be{\begin{equation}}
\def\ee{\end{equation}}
\def\bt{\begin{thm}}
\def\et{\end{thm}}
\def\bc{\begin{cor}}
\def\ec{\end{cor}}
\def\br{\begin{remark}}
\def\er{\end{remark}}
\def\bp{\begin{prop}}
\def\ep{\end{prop}}
\def\bl{\begin{lem}}
\def\el{\end{lem}}
\def\bex{\begin{example}}
\def\eex{\end{example}}
\def\bd{\begin{definition}}
\def\ed{\end{definition}}
\title{Reidemeister Torsion, Peripheral Complex, and Alexander Polynomials of Hypersurface Complements}
\author{Yongqiang Liu}
\address{Y. Liu: School of Mathematical Sciences, University of Science and Technology of China, No.96,  JinZhai Road Baohe District, Hefei, Anhui, 230026, P.R.China}
\email {liuyq@mail.ustc.edu.cn}
\author{Lauren\c{t}iu Maxim}
\address{L. Maxim: Department of Mathematics, University of Wisconsin, 480 Lincoln Drive, Madison, WI 53706, USA}
\email {maxim@math.wisc.edu}
\date{\today}
\keywords{Reidemeister torsion, Sabbah specialization complex, nearby cycles, peripheral complex, hypersurface complement, Milnor fibre, non-isolated singularities, Alexander polynomial, boundary manifold, mixed Hodge structure.}
\subjclass[2010]{}
\begin{document}

\begin{abstract}  Let $f:\CN \rightarrow \C $ be a polynomial, which is transversal (or regular) at infinity. Let $\U=\CN\setminus f^{-1}(0)$ be the corresponding affine hypersurface complement. 
By using the peripheral complex associated to $f$,
we give several estimates for the (infinite cyclic) Alexander polynomials of $\U$ induced by $f$, and we describe the error terms for such estimates. The obtained polynomial identities can be further refined by using the Reidemeister torsion, generalizing a similar formula proved by Cogolludo and Florens in the case of plane curves.
We also show that the above-mentioned peripheral complex underlies an algebraic mixed Hodge module. This fact allows us to construct mixed Hodge structures on the Alexander modules of the boundary manifold of $\U$. \end{abstract}

\maketitle

\section{Introduction}
\subsection{Background}
Let $f:\CN \rightarrow \C $ be a polynomial map, and set $F_{0}=f^{-1}(0)$ and $\U=\CN \setminus F_{0}$. The topological study of the hypersurface $F_{0}$ and of its complement $\U$ is a classical subject going back to Zariski. Libgober introduced and studied Alexander-type invariants associated to the hypersurface complement $\U$, as induced by $f$. For $F_{0}$ a plane curve \cite{L1, L2}, or a hypersurface with only isolated singularities, including at infinity \cite{L3}, Libgober obtained a divisibility result, 
asserting that the only (possibly) non-trivial global  Alexander polynomial of $\U$ divides the product of the local Alexander polynomials associated with each singular point. 
 
More recently, the second named author used the intersection homology  theory in \cite{LM2} to provide generalizations of these results to the case of hypersurfaces with arbitrary singularities, provided that the defining equation $f$ is
transversal at infinity (i.e., the hyperplane at infinity is generic with respect to the projective completion of $F_0$). In particular, he proved a general divisibility result (cf. \cite[Theorem 4.2]{LM2}) relating the global and local Alexander polynomials. Furthermore, Dimca and Libgober \cite{DL} showed that for a polynomial transversal at infinity there exist canonical mixed Hodge structures on the (torsion) Alexander invariants of the hypersurface complement.  For more results related to Alexander-type invariants for complements of hypersurfaces with non-isolated singularities, see \cite{DM} and \cite{Liu}. 

A different approach to the study of Alexander polynomials relies on  the use of  Reidemeister torsion.
Milnor \cite{M2,M3} showed that the Alexander polynomial of a link coincides with the Reidemeister torsion of the link complement. This approach was further developed by Turaev \cite{Tu1} for the classical Alexander polynomial, and by Lin \cite{Lin} and Wada \cite{W} for twisted Alexander polynomials. 
Kirk and Livingston \cite{KL} extended this theory  to any finite CW-complex; in particular, they generalized Milnor's duality theorem for Reidemeister torsion. 

Cogolludo and Florens \cite{CF} studied twisted Alexander polynomials of plane algebraic curves by using the Reidemeister torsion, and obtained a polynomial identity involving global and local twisted Alexander polynomials. Specializing their result to the classical case (of the trivial representation), one obtains a geometric interpretation of Libgober's divisibility result.  

Let us assume from now on that $f$ is transversal at infinity. One of the goals of this paper is to provide a generalization to the case of hypersurface with non-isolated singularities of the Cogolludo-Florens identity for Alexander polynomials (cf. \cite[Corollary 5.8]{CF}). Our main tool will be the Cappell-Shaneson peripheral complex \cite{CS} associated to $f$. In more detail, we give a new description of the peripheral complex, from which we deduce several error estimates for the 
Alexander polynomials of the complement. Moreover, by exploiting the relation between the Alexander polynomials and Reidemeister torsion (\cite[Theorem 3.4]{KL}), we show how these estimates can be further refined by using the intersection form appearing in the duality for Reidemeister torsion.   

Our new description of the peripheral complex can also be used to show that the peripheral complex underlies an algebraic mixed Hodge module. In particular, after explaining the relation between the peripheral complex and the boundary manifold of the complement $\U$, we obtain mixed Hodge structures (MHS for short) on the Alexander modules of this boundary manifold. 

\subsection{Main results}
Unless otherwise specified, all homology and cohomology groups will be assumed to have $\Q$-coefficients.

Let $f:\CN\rightarrow \C$ be a degree $d$ polynomial. We say that $f$ is {\it transversal} (or {\it regular}) {\it at infinity} if $f$ is reduced, and the projective closure $V$ of $F_{0}$ in $\CP$ is transversal in the stratified sense to the hyperplane at infinity $H=\CP\setminus \CN$. 
Consider the infinite cyclic cover $\U^{c}$ of $\U$ corresponding to the kernel of the linking number homomorphism \begin{center}
 $f_{\ast}: \pi_{1}(\U)\rightarrow \pi_{1}(\C^{\ast})=\Z$
\end{center} induced by $f$. Then under the deck group action, each homology group $H_{i}(\U^{c})$ becomes a $\Gamma:=\Q[t,t^{-1}]$-module, called the {\it $i$-th Alexander module} of the hypersurface complement $\U$.  For $f$ transversal at infinity, the second named author showed that $H_{i}(\U^{c})$ is a torsion $\Gamma$-module for $i\leq n$ (cf. \cite[Theorem 3.6]{LM2}). We denote by $\delta_{i}(t)$ the corresponding (global) {\it Alexander polynomial}.

Let $N$ be an open regular neighborhood of $V\cup H$ in $\CP$ (cf. \cite{Du2}). Set $\U_{0}=\CP \setminus N$. Then $\U_{0}$ is homotopy equivalent to $\U$, and the boundary $\partial \U_{0}$ is a $(2n+1)$-dimensional real  manifold, called the {\it boundary manifold of $\U$}. The inclusion $\partial \U_{0} \hookrightarrow \U_{0}$ is an $n$-homotopy equivalence (cf. \cite[(5.2.31)]{D1}). Moreover, we have an epimorphism:  \begin{center}
$\rho: \pi_{1}(\partial \U_{0})\twoheadrightarrow \pi_{1}(\U_{0})= \pi_{1}(\U)\overset{f_*}{\twoheadrightarrow}  \pi_{1}(\C^{\ast})=\Z$,
\end{center}   which defines the infinite cyclic cover $(\partial \U_{0})^{c}$ of $\partial \U_{0}$. The related intersection form $\phi^{\rho}\in \Q(t)$ for the pair $(\U_0,\partial \U_0)$ is defined on $H_{n+1}( \U_{0}^{c})$, see (cf. \cite{KL}) or Section \ref{int} below. 

The {\it peripheral complex} $\R$ associated to $f$ (see \cite{CS,LM2} or Definition \ref{defpc} below) is a torsion $\Gamma$-module sheaf complex, which plays a key role in the second named author's generalizations of Libgober's results to the case of hypersurfaces with non-isolated singularities. Our first result is the following (see Proposition \ref{6.1}, Corollary \ref{3.3} and Corollary \ref{6.2}):

\begin{thm}  Assume that the polynomial $f: \CN\rightarrow \C$ is transversal at infinity. Then:
\begin{enumerate}
\item[(a)] there are $\Gamma$-module isomorphisms  \begin{center}
$H_{i}((\partial \U_{0})^{c})\cong H^{2n+1-i}(\CP;\R)$ for all $i$,
\end{center}
and, in particular,  $H_{i}((\partial \U_{0})^{c})$ is  a torsion $\Gamma$-module. Moreover, the zeros of the Alexander polynomial associated to $H_{i}((\partial \U_{0})^{c})$ are roots of unity for all $i$, and have order $d$ except for $i=n$. Finally, $H_{i}((\partial \U_{0})^{c})$ is a semi-simple $\Gamma$-module for $i\neq n$.  
\item[(b)] the peripheral complex $\R$ (when regarded as a complex of $\Q$-vector sheaves) is a (shifted) mixed Hodge module, hence the vector spaces $H_{i}((\partial \U_{0})^{c})$ inherit mixed Hodge structures from $\R$, for all $i$. Moreover, for $i\neq n$, the mixed Hodge structure on $H_{i}((\partial \U_{0})^{c})$ is compatible with the $\Gamma$-action, i.e., $t: H_{i}((\partial\U_{0})^{c})\rightarrow H_{i}((\partial\U_{0})^{c})$ is a mixed Hodge structure homomorphism.
\end{enumerate}
\end{thm}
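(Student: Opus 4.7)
My overall strategy is to prove (a) by a sheaf-theoretic computation on the pair $(\CP, V\cup H)$ that identifies the cohomology of $(\partial\U_0)^c$ with the hypercohomology of $\R$ on $\CP$, and then to promote this identification to the category of mixed Hodge modules for (b). For (a), let $i:V\cup H\hookrightarrow \CP$ and $j:\U\hookrightarrow \CP$ be the inclusions, and let $\mathcal{L}$ denote the rank-one $\Gamma$-local system on $\U$ arising from $f_*$. Since $\partial\U_0$ is the boundary of the regular neighborhood $N$, it is homotopy equivalent to $N\setminus (V\cup H)$, so cohomology with local coefficients yields
\begin{equation*}
H^*((\partial\U_0)^c;\Q) \cong H^*(V\cup H;\, i^*Rj_*\mathcal{L}).
\end{equation*}
Using the new description of $\R$ announced in the introduction, I would identify this complex (up to a shift) with the Verdier dual of $i^*Rj_*\mathcal{L}$; Poincar\'e--Lefschetz duality on the compact subvariety $V\cup H\subset \CP$ then converts the right-hand side into $H^{2n+1-i}(\CP;\R)$. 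Since $\R$ is a torsion $\Gamma$-sheaf complex supported on the compact space $V\cup H$, its hypercohomology is finite dimensional over $\Q$, hence $\Gamma$-torsion.

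For the root-of-unity statement and the control of the orders, I would use that the stalks of $\R$ are computable via nearby cycles $\psi_f$: the monodromy theorem forces the $t$-eigenvalues on $\psi_f$ to be roots of unity, and a Whitney stratification of $V\cup H$ combined with the hypercohomology spectral sequence propagates this to global eigenvalues. The finer order-$d$ statement for $i\neq n$ uses transversality at infinity: away from $V\cap H$, the relevant local monodromy of the total complement comes only from the hyperplane $H$ where it has exact order $d$, and a perversity/degree count shows that contributions from deeper strata (which are responsible for other orders of roots of unity) can appear only in the middle degree $i=n$. Semi-simplicity in degrees $i\neq n$ follows by the same stratification, since the contributing graded pieces of $\R$ reduce to local systems on smooth strata on which monodromy acts semi-simply.

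For (b), my new description exhibits $\R$ as a cone (or iterated cone) of morphisms between Sabbah specialization / nearby cycle complexes associated to $f$. Each such complex underlies an algebraic mixed Hodge module, morphisms of MHMs are morphisms in $D^b\MHM(\CP)$, and cones of such morphisms land back in $D^b\MHM(\CP)$; thus $\R$ inherits the structure of a (shifted) object of the derived category of mixed Hodge modules on $\CP$. Applying the cohomological functor $H^*(\CP;-)$ lands in $D^b\MHS$, producing the claimed mixed Hodge structures on $H_i((\partial\U_0)^c)$ via the isomorphism of (a). The $t$-endomorphism on $\R$ descends from the monodromy on nearby cycles, which is a morphism in $\MHM$, so the induced operator on each $H^*(\CP;\R)$ is a morphism of MHS; for $i\neq n$ the explicit form of $\R$ outside the middle degree identifies this operator with the $\Gamma$-action, giving the compatibility.

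The main obstacle is promoting the new description of $\R$ to an identification in $D^b\MHM$: I need functoriality of the cone construction together with careful tracking that both the connecting morphism and the monodromy lift to the MHM level, and that the resulting $\Gamma$-action matches the deck transformation. A secondary difficulty in (a) is the sharper order-$d$ statement away from $i=n$, which requires a careful perverse-sheaf analysis near $V\cap H$ to see why eigenvalues of order different from $d$ are confined to the middle degree.
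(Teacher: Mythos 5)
Your overall architecture matches the paper's: identify $H_i((\partial \U_0)^c)$ with $H^{2n+1-i}(\CP;\R)$ using that $\R|_{V\cup H}\cong (Rj_*\K)|_{V\cup H}$ and the regular-neighborhood structure, then use the restrictions of $\R$ to $V$ and to $H$ to build the Hodge-theoretic part. However, the finer claims in (a) are not established by your sketch. The assertion that eigenvalues of order different from $d$ are ``confined to the middle degree by a perversity/degree count'' is precisely the hard content of the statement: a stratified hypercohomology spectral sequence only yields roots of unity of orders dividing various local multiplicities, and nothing in your outline forces the non-order-$d$ contributions into degree $n$, nor does it give semi-simplicity. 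The paper sidesteps this entirely: comparing the homology long exact sequence of the pair $(\U_0,\partial\U_0)$ with the hypercohomology sequence of the triangle $j_!\K\to Rj_*\K\to\R$ shows that $H^{2n+1-i}(\CP;\R)\cong H_i(\U^c)$ for $i<n$ and $\cong\overline{H_{2n-i}(\U^c)}$ for $i>n$ (Proposition \ref{p4.1}), so the order-$d$ and semi-simplicity statements for $i\neq n$ are imported directly from the known structure of the Alexander modules of $\U$ (Theorem \ref{t2.2}). You should route the argument through that identification rather than attempt a stratum-by-stratum monodromy analysis. (Also note that your cohomological identification $H^k((\partial\U_0)^c)\cong H^k(V\cup H;i^*Rj_*\K)$ still requires Poincar\'e duality on the compact manifold $\partial\U_0$, with the attendant involution $t\mapsto t^{-1}$, to produce the homological indexing in the statement.)

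For (b) the genuine gap is the lifting to mixed Hodge modules. One cannot take ``cones of morphisms of MHMs'' naively here: the defining triangle of $\R$ involves $j_!\K$ and $Rj_*\K$, and $\K$ (a non-torsion $\Gamma$-local system, of infinite rank over $\Q$) does not underlie a mixed Hodge module, so the relevant decomposition is instead the triangle $s_!(\R|_{H\setminus V\cap H})\to\R\to r_*(\R|_V)$ over $V\cup H$. Both outer terms, shifted by $n+1$, are perverse and underlie MHMs (using that $s$ and $i_v$ are quasi-finite affine, $r$ is proper, $\K(h)\cong Rp_*\Q_{F_h}$, and $\psi_f[-1]$ preserves $\MHM$). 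But the connecting morphism of this triangle is a priori only a morphism of $\Q$-complexes, and cones are not functorial, so ``the cone lands back in $D^b\MHM$'' does not follow; this is exactly the obstacle you flag as your ``main obstacle'' without resolving it. The missing ingredient is Saito's theorem that the forgetful functor is surjective on (Yoneda) Ext groups between mixed Hodge modules (\cite[Theorem 2.10]{Sa1}), which lets one lift the extension class of $\R[n+1]$ in ${\rm Ext}^1$ of perverse sheaves to an extension in $\MHM$. Finally, be careful with ``the monodromy on nearby cycles is a morphism in $\MHM$'': only its semisimple part is. This suffices for the compatibility claim precisely because it is restricted to $i\neq n$, where the relevant modules are semisimple; the paper verifies it through the identifications of $H^{2n+1-i}(V\cup H;\R)$ with $H^{2n-i}_c(F_0;\psi_f\C)\cong H_i(F_h)$ for $i<n$ and with $H_i(F_h,\partial F_h)$ for $i>n$.
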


Let $h=f_{d}$ be the top degree part of $f$, with corresponding Milnor fibre $F_{h}=\{h=1\}$, and denote by $h_{i}(t)$ the Alexander polynomial (or order) associated to the torsion $\Gamma$-module $H_{i}(F_{h})$. 
On the other hand, let $\psi_{f}\Q_{\CN}$ be the nearby cycle complex associated to $f$, and denote by  $\psi_{i}(t)$ the corresponding Alexander polynomial of $H^{2n-i}_{c}(F_{0},\psi_{f}\Q_{\CN})$.  In \cite[Theorem 1.1]{Liu}, the first named author studied the relation between the polynomials $\psi_{i}(t)$ and the Alexander polynomials $\delta_i(t)$ of the hypersurface complement $\U$. In particular, he showed that $\psi_i(t)=\delta_i(t)$ for $i<n$, and $\delta_n(t)$ divides $\psi_n(t)$. 
With the above notations, we have the following result, which establishes a more precise relationship between the polynomials $\delta_n(t)$ and $\psi_n(t)$ (see Theorem \ref{t6.2}):  

\begin{thm}\label{1.2} Assume that the degree $d$ polynomial $f: \CN\rightarrow \C$ is transversal at infinity. Let $\phi^{\rho}$ be the intersection form for $(\U_{0},\partial \U_{0})$ induced by $\rho$. Then, 
\begin{center}
$h_{n}(t)\cdot \psi_{n}(t)= \delta_{n}^{2}(t)\cdot \det(\phi^{\rho})$.
\end{center}  Moreover, we have the following degree estimates \footnote{Recall that the total degree of a Laurent polynomial in $\Q[t,t^{-1}]$ is defined as the difference between the highest and resp. the lowest power of $t$ (with non-zero coefficients). In particular, unit elements $ct^k$ ($c \in \Q$, $k \in \Z$) of $\Q[t,t^{-1}]$ have total degree zero. The total degree of a product of Laurent polynomials is the sum of the total degrees of the factors. The degree of an element $P/Q \in \Q(t)$ (with $P, Q \in \Q[t,t^{-1}]$) is defined as the difference between the total degrees of $P$ and $Q$ respectively.}:
$$\deg( \det(\phi^{\rho}))\leq 2 d \cdot\mu,$$ where $\mu=\vert \chi  (\U) \vert$.
\end{thm}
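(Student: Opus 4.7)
The plan is to combine the Reidemeister torsion formulation of Alexander polynomials with the Kirk--Livingston duality theorem for the Poincar\'e--Lefschetz pair $(\U_0,\partial \U_0)$, interpreting all boundary contributions through the peripheral complex $\R$. First, I would express the Reidemeister torsion $\tau(\U_0)$ as an alternating product $\prod_i \delta_i(t)^{(-1)^{i+1}}$ (up to units in $\Gamma$), using that $\U_0 \simeq \U$ and, by \cite[Theorem 3.6]{LM2}, the Alexander modules $H_i(\U^c)$ are $\Gamma$-torsion for $i\le n$ and vanish for $i>n+1$ (Andreotti--Frankel), while $H_{n+1}(\U^c)$ has $\Gamma$-rank $\mu$.

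Next, I would apply the duality theorem \cite[Theorem 3.4]{KL} to the $(2n+2)$-dimensional manifold with boundary $(\U_0,\partial \U_0)$, which yields, up to units in $\Gamma$, an identity of the form
\[
\tau(\U_0)\cdot \overline{\tau(\U_0)} \;\doteq\; \tau(\partial \U_0)\cdot \det(\phi^\rho),
\]
where the bar denotes the involution $t\mapsto t^{-1}$. Combined with the conjugate-symmetry of the $\delta_i$'s (coming from the Poincar\'e-type duality on $\U$ mediated by $\R$), the left-hand side simplifies to $\prod_i \delta_i(t)^{2(-1)^{i+1}}$, so that only even powers of $\delta_n$ are at play in middle degree.

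The key step is to compute $\tau(\partial \U_0)$ using the peripheral complex. By Theorem 1.1(a), the orders of $H_i((\partial \U_0)^c) \cong H^{2n+1-i}(\CP;\R)$ are extracted from the hypercohomology of $\R$. Using the Cappell--Shaneson-type distinguished triangle relating $\R$ to the nearby cycle complex $\psi_f\Q_{\CN}$ and to the specialization at infinity governed by $h=f_d$, the hypercohomology long exact sequence gives a factorization of each $H^i(\CP;\R)$ as an alternating combination of contributions from $h_j(t)$ and $\psi_j(t)$. Invoking the identification $\psi_i=\delta_i$ for $i<n$ from \cite[Theorem 1.1]{Liu}, together with the semi-simplicity of $H_i((\partial \U_0)^c)$ for $i\neq n$ and the fact that its associated polynomial has only roots of unity of order $d$ (Theorem 1.1(a)), the off-middle factors should match (and cancel against) the corresponding terms in $\tau(\U_0)\overline{\tau(\U_0)}$; what remains is precisely the middle-degree identity $h_n(t)\cdot \psi_n(t) = \delta_n^2(t)\cdot \det(\phi^\rho)$.

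For the degree estimate, $\phi^\rho$ is defined on $H_{n+1}(\U_0^c) \cong H_{n+1}(\U^c)$, a $\Gamma$-module of rank $\mu$. Using a CW model of $\U_0$ obtained from a regular neighborhood of the projective completion of $F_0$, the number of cells in each dimension is bounded linearly in the degree $d$, so the $\mu\times\mu$ matrix representing $\phi^\rho$ has entries in $\Q(t)$ of total degree at most $d$; the Hermitian/sesquilinear nature of the pairing (each entry contributes together with its $t\mapsto t^{-1}$-conjugate) then yields $\deg(\det(\phi^\rho))\le 2d\mu$. The principal obstacle lies in the cancellation argument of the third step: one must verify carefully that the factorizations of $H^i(\CP;\R)$ arising from the Cappell--Shaneson triangle combine across degrees so that only the desired middle-dimensional terms survive, and this requires tracking the conjugate-symmetric structure of all Alexander polynomials involved against the Milnor fibre polynomials $h_i(t)$ of $h=f_d$.
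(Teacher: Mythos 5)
Your derivation of the polynomial identity follows essentially the same route as the paper: torsion of $\U_0$ as the alternating product $\prod_i \delta_i(t)^{(-1)^{i+1}}$, the Kirk--Livingston duality formula for $(\U_0,\partial\U_0)$ introducing $\det(\phi^\rho)$, the identification $H_i((\partial\U_0)^c)\cong H^{2n+1-i}(\CP;\R)$, and the cancellation of the off-middle factors via $r_i=\delta_i$ ($i<n$), $r_i=\overline{\delta_{2n-i}}$ ($i>n$), leaving $r_n=\delta_n\overline{\delta_n}\det(\phi^\rho)$ and then $r_n=\overline{h_n}\,\psi_n=h_n\psi_n$ by the splitting of $H^{n+1}(V\cup H;\R)$ over the two inclusions $F_0\hookrightarrow V\cup H\hookleftarrow H$ and $(H\setminus V\cap H)\hookrightarrow V\cup H\hookleftarrow V$. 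The step you flag as the ``principal obstacle'' is in fact unproblematic and is exactly Corollary \ref{c4.2} together with Theorem \ref{t4.1}; the only bookkeeping point to watch is the exponent $(-1)^{m}$ on $\det(\phi^\rho)$ in Lemma \ref{5.6} (with $m=n+1$), which you wrote without the sign --- it cancels in the comparison of the two expressions for $\tau_\rho(\partial\U_0)$, but as stated your duality identity is only correct for one parity of $n$.

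The genuine gap is in the degree estimate. Your argument --- that a CW model of $\U_0$ has cell counts bounded linearly in $d$, hence the entries of the $\mu\times\mu$ matrix of $\phi^\rho$ have total degree at most $d$, hence $\deg\det(\phi^\rho)\le 2d\mu$ --- does not work. The number of cells of $\U_0$ grows like $d^{n+1}$, not linearly in $d$, and in any case the degree of an entry of the equivariant intersection form on the infinite cyclic cover is governed by how many deck translates of one cycle meet another, which is not controlled by cell counts in any direct way; no bound of the form ``each entry has degree $\le d$'' is established or true in general. The paper's actual argument is entirely different: it first proves $\det(\phi^\rho)=\varphi(t)=\varphi_1(t)\cdot\overline{\varphi_2(t)}$ with $\varphi_1=h_n/\delta_n$ and $\varphi_2=\psi_n/\delta_n$, and then bounds $\deg\varphi_1\le d\cdot\mu$ using the Dimca--Papadima result that the Milnor fibre $\widetilde F$ of the homogenization is obtained from $F_h$ (up to homotopy) by attaching $d\cdot\mu$ cells of dimension $n+1$, combined with the $(n+1)$-homotopy equivalence $\U^c\to\widetilde F$ giving $H_n(\U^c)\cong H_n(\widetilde F)$; the inequality $\deg\varphi_2\le\deg\varphi_1$ then comes from $\dim H^n_c(F_0,\psi_f\Q)\le\dim H_n(F)$. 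You would need to supply this (or an equivalent) argument; as written, the estimate $\deg(\det(\phi^\rho))\le 2d\cdot\mu$ is asserted rather than proved.
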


As an application to the case of polynomials with only isolated singularities, we obtain  the following generalization of Corollary 5.8 from \cite{CF}, and a new obstruction on the (degree of the) intersection form:

\begin{cor}\label{1.3} Let $f: \CN\rightarrow \C$ be a degree $d$ polynomial, which is transversal at infinity.  Assume that the hypersurface $F_0=\{f=0\}$ has only isolated singularities. Then we have the following polynomial identity:
\begin{equation}
(t-1)^{(-1)^{n+1}(1+\chi(\U))}(t^{d}-1)^{\xi}\cdot \prod_{p\in Sing(F_{0})}\Delta_{p}(t)= \delta_{n}(t)^{2} \cdot det(\phi^{\rho})
\end{equation}
 where $ \Delta_{p}(t)$ is the top local Alexander polynomial associated to the point $p\in Sing(F_{0})$, and $\xi=\dfrac{(d-1)^{n+1}+(-1)^{n}}{d}$.  Moreover, the degree of the polynomial $\det(\phi^{\rho})$ is even.
\end{cor}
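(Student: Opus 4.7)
The plan is to substitute explicit computations of $h_n(t)$ and $\psi_n(t)$ into Theorem \ref{1.2}, and then to handle the parity claim for $\det(\phi^{\rho})$ via a duality argument.

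First I would compute $h_n(t)$. Since $f$ is transversal at infinity, the top-degree form $h = f_d$ defines a smooth hypersurface in the hyperplane at infinity $\mathbb{CP}^n = \CP \setminus \CN$; equivalently, $h:\CN\rightarrow\C$ has an isolated singularity at the origin, and the Milnor fibre $F_h$ is homotopy equivalent to a bouquet of $(d-1)^{n+1}$ copies of $S^n$. Since the monodromy $T$ has order $d$ and acts freely on $F_h = \{h=1\}$, the Lefschetz fixed point theorem gives $L(T^k) = 0$ for $1 \leq k \leq d-1$; solving the resulting linear system by Fourier inversion on $\Z/d$ (equivalently, invoking the Milnor--Orlik formula) yields
\begin{equation*}
h_n(t) = (t^d-1)^{\xi} \cdot (t-1)^{(-1)^{n+1}},
\end{equation*}
with the identity $d\xi + (-1)^{n+1} = (d-1)^{n+1} = \dim H_n(F_h)$ furnishing a sanity check on the degree.

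Second I would compute $\psi_n(t)$. Let $S = \mathrm{Sing}(F_0)$, with $i: S\hookrightarrow F_0$ the closed inclusion and $j: F_0 \setminus S \hookrightarrow F_0$ its open complement. The complex $\psi_f \Q_{\CN}$ is isomorphic to the constant sheaf $\Q$ in degree zero on $F_0 \setminus S$ (the local Milnor fibre at a smooth point of $F_0$ being contractible), while its stalks at $p \in S$ compute the cohomology of the local Milnor fibre $F_p$, itself a bouquet of $\mu_p$ copies of $S^n$. Applying the distinguished triangle $j_! j^\ast \rightarrow \mathrm{id} \rightarrow i_\ast i^\ast$ to $\psi_f \Q_{\CN}$ and taking the alternating product of characteristic polynomials of the $t$-action along the associated long exact sequence of compactly supported cohomology, one obtains
\begin{equation*}
\prod_k \psi_k(t)^{(-1)^k} = (t-1)^{\chi(F_0)} \cdot \prod_{p \in S} \Delta_p(t)^{(-1)^n}.
\end{equation*}
By \cite[Theorem 1.1]{Liu}, $\psi_k(t) = \delta_k(t)$ for $k<n$, and the divisibility results of \cite{LM2} force $\delta_k(t) = 1$ for $0<k<n$ in the isolated singularities case, while $\delta_0(t) = t-1$ follows from the connectedness of $\U^c$. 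For $k>n$, the vanishing $\psi_k(t)=1$ is a consequence of Artin vanishing applied to the perverse sheaf $\psi_f \Q_{\CN}[n]$ on the affine variety $F_0$. Combined with the Euler characteristic identity $\chi(F_0) + \chi(\U) = \chi(\CN) = 1$, a short manipulation then delivers
\begin{equation*}
\psi_n(t) = (t-1)^{(-1)^{n+1}\chi(\U)} \cdot \prod_{p \in S} \Delta_p(t).
\end{equation*}

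Substituting these into the relation $h_n(t) \cdot \psi_n(t) = \delta_n^2(t) \cdot \det(\phi^{\rho})$ from Theorem \ref{1.2} yields the desired polynomial identity. For the parity of $\deg\det(\phi^{\rho})$, the intersection form $\phi^{\rho}$ on $H_{n+1}(\U_0^c)$ is hermitian (up to a sign) with respect to the involution $t \mapsto t^{-1}$ by the Reidemeister torsion duality of Kirk--Livingston \cite{KL}; this forces $\det(\phi^{\rho})$ to be a (anti-)palindromic element of $\Q(t)$, whose total degree is therefore even. The main difficulty I foresee is the careful bookkeeping of $(t-1)$-factors: the $\chi(F_0)$-contribution from the smooth locus of $F_0$ must combine with the $(t-1)^{(-1)^{n+1}}$ factor of $h_n(t)$ to produce exactly the exponent $(-1)^{n+1}(1+\chi(\U))$ in the statement, and the auxiliary vanishings $\psi_k(t) = 1$ for $k>n$ (from Artin vanishing for the perverse sheaf $\psi_f \Q_{\CN}[n]$) and $\delta_k(t) = 1$ for $0<k<n$ (from the earlier divisibility theory for isolated singularities) will require separate justification.
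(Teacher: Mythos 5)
The main polynomial identity is handled correctly and by essentially the same route as the paper: substitute closed forms for $h_n(t)$ and $\psi_n(t)$ into $h_n(t)\cdot\psi_n(t)=\delta_n^2(t)\cdot\det(\phi^{\rho})$. The paper simply cites these two formulas (from \cite[Section 5.2.1]{Liu} for $\psi_n(t)=(t-1)^{\mu}\prod_p\Delta_p(t)$ and from \cite[(4.1.23)]{D1} for $h_n(t)=(t-1)^{(-1)^{n+1}}(t^d-1)^{\xi}$), whereas you rederive them; your derivations (free monodromy action plus Lefschetz for $h_n$, and the stratified alternating-product computation combined with Artin vanishing, $\psi_k=\delta_k$ for $k<n$, and $\delta_k=1$ for $0<k<n$ for $\psi_n$) are sound and the bookkeeping of $(t-1)$-exponents via $\chi(F_0)+\chi(\U)=1$ comes out right.

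The gap is in the parity claim. The implication you invoke --- that a $\pm$-palindromic element of $\Q(t)$ has even total degree --- is false if ``palindromic'' is understood, as is standard for Alexander-type invariants, up to units $ct^k$: the element $t-1$ satisfies $\overline{t-1}=-t^{-1}(t-1)$ and has total degree $1$. What is true is that an element $R\in\Q(t)^{\ast}$ satisfying the \emph{exact} identity $\overline{R}=\pm R$ (no unit ambiguity) has even degree; one checks this by writing $R=P/Q$ in lowest terms, deducing $\overline{P}=uP$ and $\overline{Q}=\pm uQ$ for one and the same unit $u=ct^k$, and comparing supports. So to make your argument work you must verify that $\phi^{\rho}$ is exactly $(-1)^{n+1}$-hermitian for the involution, so that in any fixed basis $\det(\phi^{\rho})=(-1)^{(n+1)\mu}\overline{\det(\phi^{\rho})}$ on the nose, and that the basis ambiguity only changes the determinant by norms $N\overline{N}$ (which preserve both the exact symmetry and the parity of the degree). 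None of this is in your write-up, and without it the step fails. The paper avoids the issue entirely: it shows $\det(\phi^{\rho})=\varphi(t)=\varphi_1(t)\varphi_2(t)$ and proves $\deg\varphi_1=\deg\varphi_2$ by the Euler characteristic identity $\chi(F_h)=\chi(F_0,\psi_f\Q)$, valid because $V\cap H$ is smooth when $F_0$ has only isolated singularities, so that both sides equal $\chi(\{h+x_0^d=0\})-\chi(V\cap H)$. Either repair is acceptable, but as written the parity assertion is not justified.
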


\subsection{Summary}

The paper is organized as follows. 

In Section \ref{prelim}, we recall the definition and main results on the Alexander modules, peripheral complex and the Sabbah specialization complex.
In section \ref{per}, we give a new description of the peripheral complex associated with a hypersurface. As a byproduct, we show that the peripheral complex underlies a (shifted) algebraic mixed Hodge module. In section \ref{error}, we give several estimates for the Alexander polynomials of the hypersurface complement and study  the error terms for such estimates. 
Section \ref{Rtors} recalls the basic constructions and main results on the Reidemeister torsion of a finite CW-complex, and in particular, the duality theorem and the intersection form for the torsion. In Section \ref{boundary}, we introduce the boundary manifold $\partial \U_0$ of the hypersurface complement, and we describe its (linking number) Alexander modules $H_{i}((\partial\U_{0})^{c})$ in terms of the peripheral complex. In particular, we endow these Alexander modules $H_{i}((\partial\U_{0})^{c})$ with mixed Hodge structures. 
Finally, Section \ref{Alexid} is devoted to the proof of both Theorem \ref{1.2} and Corollary \ref{1.3}. 

\bigskip

\textbf{Acknowledgments.} We are grateful to Alex Dimca and J\"org Sch\"urmann for useful discussions.
The first named author is supported by China Scholarship Council (file No. 201206340046). He thanks the Mathematics Department at the University of Wisconsin-Madison for hospitality during the preparation of this work. The second named author is partially supported by grants from NSF (DMS-1304999), NSA (H98230-14-1-0130),  Simons Foundation (\#277891), and by a grant of the Ministry of National
Education, CNCS-UEFISCDI project number PN-II-ID-PCE-2012-4-0156.


\section{Preliminaries}\label{prelim}

\subsection{Alexander modules}

Let $f=f(x_{1},\cdots,x_{n+1}):\CN \rightarrow \C  $ be a reduced degree $d$ polynomial map, and set $F_{0}=f^{-1}(0)$ and $\U=\CN\setminus F_{0} $. 
We say that $f$ is {\it transversal at infinity} if the projective closure $V$ of $F_{0}$ in $\CP$ is transversal in the stratified sense to the hyperplane at infinity $H=\CP\setminus \CN=\lbrace x_{0}=0 \rbrace$. If $f$ is transversal at infinity, the affine hypersurface $ F_{0} $ is homotopy equivalent to a bouquet of $n$-spheres, i.e.,  
\begin{equation}\label{join} F_{0}\simeq \vee_{\mu} S^{n},\end{equation} where $\mu$ denotes the number of spheres in the above join (cf. \cite[page 476]{DP}). It is shown in loc.cit. that $\mu$ can be determined topologically as the degree of the gradient map associated to the homogenization $\tilde{f}$ of $f$.

We have a surjective homomorphism: $\pi_{1}(\U)\rightarrow \pi_{1}(\C^{\ast})=\Z  $ induced by $f$, which shall be called the {\it linking number homomorphism} (see \cite[page 76-77]{D1} for a justification of terminology).
Let us consider the corresponding infinite cyclic cover $\U^{c}$ of $\U$. Then under the deck group action, every homology group $H_{i}(\U^{c},\Q)$ becomes a $\Gamma:=\Q[t,t^{-1}]$-module.
\begin{definition}  The $\Gamma$-module $ H_{i}({\U}^{c})$  is called the {\it $i$-th Alexander module} of the hypersurface complement $\U$. 
\end{definition}
When $H_{i}(\U^{c})$ is a torsion $\Gamma$-module, we denote by $\delta_{i}(t)$ the corresponding {\it Alexander polynomial} (also called {\it order} in \cite{M4}). Since $\U$ has the homotopy type of a finite $(n+1)$-dimensional CW complex, $H_{i}(\U^{c})=0$ for $i>n+1$ and $H_{n+1}(\U^{c})$ is a free $\Gamma$-module. Hence the only interesting Alexander modules $H_{i}(\U^{c})$ appear in the range $0\leq i\leq n$, and the following result holds:
   
 \begin{thm}(\cite[Theorems 3.6, 4.1]{LM2} \label{t2.2}  Assume that $f: \CN\rightarrow \C$ is a reduced, degree $d$ polynomial, which is transversal at infinity.  Then $H_{i}({\U}^{c}) $ is a finitely generated semi-simple torsion $\Gamma$-module for $ 0 \leq i\leq n$, and the roots of the corresponding Alexander polynomial $\delta_{i}(t)$ are roots of unity of order $d$.
 \end{thm}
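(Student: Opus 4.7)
The plan is to exploit the fact that $\U^c$ is an intermediate cover of the global Milnor fibre of the homogenization $\tilde{f}(x_0,\ldots,x_{n+1}) := x_0^d f(x_1/x_0,\ldots,x_{n+1}/x_0)$. Setting $F := \tilde{f}^{-1}(1) \cap \{x_0 \neq 0\}$, the map sending $(x_0,\ldots,x_{n+1})$ to $(x_1/x_0,\ldots,x_{n+1}/x_0)$ realizes $F$ as a $d$-fold cyclic cover of $\U$ with deck group $\Z/d$ acting by simultaneous multiplication by $d$-th roots of unity; this cover corresponds to the composition $\pi_1(\U) \twoheadrightarrow \Z \twoheadrightarrow \Z/d$, so that $F = \U^c/\langle t^d\rangle$. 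Note that $F$ is a smooth affine variety of complex dimension $n+1$, hence its rational homology is finitely generated over $\Q$ and vanishes above degree $n+1$.

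Finite generation of $H_i(\U^c;\Q)$ as a $\Gamma$-module is a formal consequence of Andreotti--Frankel applied to $\U$: it furnishes a finite CW model of real dimension at most $n+1$, and lifting cells to $\U^c$ produces a cellular chain complex of finitely generated free $\Gamma$-modules. Given this, all remaining structural claims would follow from a single assertion: that $t^d$ acts as the identity on $H_i(\U^c;\Q)$ for $0\leq i\leq n$. Indeed, $\Gamma/(t^d-1)$ is a semisimple $\Q$-algebra whose simple factors correspond bijectively to the divisors of $d$, so any finitely generated $\Gamma$-module annihilated by $t^d-1$ is automatically torsion (in fact finite-dimensional over $\Q$), semi-simple, and has Alexander polynomial with roots among the $d$-th roots of unity.

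The core task is therefore to prove $t^d-1=0$ on $H_i(\U^c;\Q)$ for $i\leq n$. One route proceeds via the $\Z/d$-eigenspace decomposition $H_i(F;\C) = \bigoplus_{\zeta \in \mu_d} H_i(\U;\mathcal{L}_\zeta)$, where $\mathcal{L}_\zeta$ is the rank-one local system on $\U$ with monodromy $\zeta$, combined with a universal-coefficients identification of generic $\C[t,t^{-1}]$-fibres of $H_i(\U^c;\C)$ with $H_i(\U;\mathcal{L}_\lambda)$. The statement reduces to the vanishing $H_i(\U;\mathcal{L}_\lambda)=0$ for $i\leq n$ and $\lambda \notin \mu_d$, which would in turn follow from a stratified-Morse/Lefschetz argument on the projective compactification (after blowing up the smooth transversal intersection $V\cap H$, producing a proper map to $\mathbb{CP}^1$ whose monodromy at infinity has order exactly $d$). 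The hardest step is precisely this vanishing, and transversality at infinity is indispensable here: it is what decouples the behaviour at $H$ from the singularities of $F_0$ and pins the monodromy at infinity to order $d$. Alternatively, and more in keeping with the tools of the present paper, one identifies $H_i(\U^c;\Q)$ in the relevant range with the hypercohomology of the peripheral complex $\R$, a $\Gamma$-torsion sheaf whose stalks are built from local Milnor-fibre monodromies of $\tilde f$ and hence have eigenvalues in $\mu_d$; this stalkwise property then propagates globally to hypercohomology, yielding all the desired conclusions at once.
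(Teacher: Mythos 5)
Your reduction is sound up to a point: finite generation is indeed formal (a finite CW model of $\U$ of dimension $\leq n+1$ lifts to a complex of finitely generated free $\Gamma$-modules), and every claim of the theorem does follow once one knows that $t^{d}$ acts as the identity on $H_{i}(\U^{c};\Q)$ for $i\leq n$, since $\Gamma/(t^{d}-1)$ is a semisimple $\Q$-algebra. The gap is that neither of your two routes actually proves that key assertion. The vanishing $H_{i}(\U;\mathcal{L}_{\lambda})=0$ for $i\leq n$ and $\lambda^{d}\neq 1$ shows, via universal coefficients, only that $H_{i}(\U^{c};\C)$ is a torsion module whose support lies in the $d$-th roots of unity; it gives no control over Jordan blocks at eigenvalues $\zeta$ with $\zeta^{d}=1$, so it cannot exclude summands such as $\Gamma/(t-1)^{2}$ and hence does not yield semi-simplicity. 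The $\Z/d$-eigenspace decomposition of $H_{i}(F;\C)$ for your $d$-fold cover $F$ has the same defect: a finite cover only sees the fibres $H_{i}(\U^{c})\otimes_{\Gamma}\C_{\zeta}$, which do not distinguish $\Gamma/(t-\zeta)$ from $\Gamma/(t-\zeta)^{2}$. Your sheaf-theoretic alternative rests on a false premise: the stalks of $\R$ along the singular locus of $V$ are local Milnor-fibre cohomologies, whose monodromy eigenvalues are roots of unity but in general \emph{not} of order dividing $d$ (a cusp on a degree-$d$ plane curve contributes primitive sixth roots of unity), and need not act semisimply; moreover, semi-simplicity does not propagate from stalks to hypercohomology through the long exact sequences. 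That the \emph{global} polynomials $\delta_{i}(t)$ nevertheless have roots in $\mu_{d}$ is exactly the nontrivial content of the theorem, not a stalkwise observation.

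The missing ingredient --- and the actual mechanism in the cited reference \cite{LM2} (the present paper quotes this theorem without proof) --- is a $\Gamma$-equivariant comparison with an honest Milnor fibre of a \emph{homogeneous} polynomial, where the monodromy is realized by the geometric homeomorphism $x\mapsto e^{2\pi i/d}x$ of order exactly $d$. Transversality at infinity, via the Lefschetz-type theorem of Dimca--Papadima, gives an $(n+1)$-equivalence from $\U^{c}$ to the full Milnor fibre $\widetilde{F}=\{\tilde{f}=1\}\subset\C^{n+2}$ (note: not your $F=\widetilde{F}\cap\{x_{0}\neq 0\}$, which is merely a finite cover of $\U$), intertwining $t$ with that geometric monodromy; alternatively one can use the surjections $H_{i}(F_{h})\twoheadrightarrow H_{i}(\U^{c})$ for $i\leq n$ coming from the link at infinity. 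Either way $t^{d}=\mathrm{id}$ holds on the nose in the range $i\leq n$, which delivers torsion, semi-simplicity, and the eigenvalue statement simultaneously. Without such a comparison, your argument proves torsion and the support condition but not semi-simplicity.
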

      
\begin{remark} 
The second named author showed in \cite{LM2} that $H_{0}(\U^{c})\cong \Gamma/(t-1)$, and $H_{n+1}({\U}^{c}) $ is a free $\Gamma$-module of rank $\vert\chi(\U)\vert$. On the other hand, by using the additivity of the Euler characteristic, it is easy to see from (\ref{join}) that $\chi(\U)=(-1)^{n+1}\mu$. Therefore,  \begin{equation}\label{free} H_{n+1}({\U}^{c})\cong \Gamma^{\mu}.\end{equation}
\end{remark}


 \subsection{Linking number local system} \label{ls} 
 Let us consider the local system $\K $ on $\U$ with stalk $\Gamma$, and representation of the fundamental group defined by the composition: 
$$ \pi_{1}(\U) \overset{f_*}{\rightarrow} \pi_{1}(\C^{\ast})   \rightarrow   Aut(\Gamma),$$
with the second map being given by  $1_{\Z}\mapsto t$. 
Here $t$ is the automorphism of $\Gamma$ given by multiplication by $t$. $\K$ shall be referred to as the {\it linking number local system}.

If $A^{\bullet}$ is a complex of $\Gamma$-sheaves, let $\mathcal{D} A^{\bullet}$ denote its Verdier dual. Then we have that:
$$\mathcal{D} \K  \cong \K^{op}[2n+2],$$
 where $\K^{op}$ is the local system obtained from $\K$ by composing all $\Gamma$-module structures with the involution $t \mapsto t^{-1}$.
  
In terms of the local system $\K$, we have the following $\Gamma$-module isomorphisms (\cite[Corollary 3.4]{LM2}): 
\begin{equation}\label{max1} H^{2n+2-i}_{c}(\U,\K)\cong H_{i}(\U,\K)\cong H_{i}(\U^{c})\end{equation} 
for all $i$.
Similarly, \be\label{max2} H_{i}(\U,\K^{op})\cong \overline{ H_{i}(\U^{c})},\ee where $\overline{\ast}$ denotes the composition with the involution $t \rightarrow t^{-1}$. By using the Universal Coefficient Theorem (e.g., see \cite[Theorem 3.4.4]{B}), we also obtain:  
 \begin{equation}\label{uct}
 H^{i+1}(\U;\K)\cong {\rm Free} \left( H_{i+1}(\U, \K^{op}) \right)\oplus {\rm Torsion} \left( H_{i}(\U,\K^{op})\right).
 \end{equation}


\subsection{The peripheral complex}\label{pc}

For any complex algebraic variety $X$ and any ring $R$, we denote by  $D^{b}_{c}(X,R)$ the derived category of bounded cohomologically $R$-constructible complexes of sheaves on $X$. For a quick introduction to derived categories, the reader is advised to consult \cite{D2}.
 
By choosing a Whitney stratification of $V$, and using the transversal hyperplane at infinity $H$, we obtain a stratification of the pair $(\CP, V \cup H)$.  Then, for any perversity function $\overline{p}$, the intersection homology complex $IC_{\overline{p}}^{\bullet}(\CP,\K) \in D^{b}_{c}(\CP,\Gamma)$ is defined by using Deligne's axiomatic construction (see \cite{B, GM2}). In this paper, we mainly use the indexing conventions from \cite{GM2}. In particular, we have the following normalization property: $IC_{\overline{p}}^{\bullet}(\CP,\K)|_{\U}\cong \K[2n+2]$.

Let us recall the following result:
\begin{thm}\label{thmax} (\cite[Lemma 3.1]{LM2}) Assume that the polynomial $f: \CN\rightarrow \C$ is transversal at infinity. Let $j$ be the inclusion of $\U$ in $\CP$. Then we have the following quasi-isomorphisms in $D^{b}_{c}(\CP,\Gamma)$:
 \be IC_{\overline{m}}^{\bullet}(\CP,\K)\cong j_{!}\K[2n+2], \ee
\be IC_{\overline{l}}^{\bullet}(\CP,\K)\cong Rj_{\ast}\K[2n+2],\ee
 where the middle and logarithmic perversities are defined as: $\overline{m}(s)=[(s-1)/2]$ and resp. $\overline{l}(s)=[(s+1)/2]$. (Note that $\overline{m}(s)+\overline{l}(s)=s-1$, i.e., $\overline{m}$ and $\overline{l}$ are superdual perversities, in the sense of \cite{CS}.)
  \end{thm}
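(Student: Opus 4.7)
The plan is to verify both quasi-isomorphisms by checking that $j_!\K[2n+2]$ and $Rj_\ast\K[2n+2]$ satisfy the Deligne axioms for $IC^\bullet_{\overline{m}}(\CP,\K)$ and $IC^\bullet_{\overline{l}}(\CP,\K)$, respectively. The normalization axiom is immediate in both cases, since both candidates restrict to $\K[2n+2]$ on the open stratum $\U$; the content lies in checking the stalk and costalk vanishing conditions on each stratum of the singular locus $V\cup H$.

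I would start with the $!$-version. Since $j_!\K[2n+2]$ has identically zero stalks on $V\cup H$, the stalk (support) vanishing condition in the axioms for $IC^\bullet_{\overline{m}}$ is automatically satisfied. The remaining task is to verify the dual costalk (cosupport) condition
\[
\mathcal{H}^i(i_S^!\, j_!\K[2n+2])=0 \text{ in the range forbidden by }\overline{m},
\]
for each stratum $S$ of $V\cup H$. For a point $x\in S$ and a small ball $B$ around $x$, this local cohomology is isomorphic to $H^\ast(B,B\setminus(V\cup H);\K)[2n+2]$. The transversality of $f$ at infinity allows one to choose a Whitney stratification of $(\CP,V\cup H)$ whose local models near a point of $V\cup H$ are products of standard pairs: $(\C,0)$ at a smooth point of $V$ (or of $H$) away from their intersection; $(\C^2,\{xy=0\})$ at a smooth intersection point of $V\cap H$; and, more generally, a product involving a transverse slice to a singular stratum of $V$ and a factor $(\C,0)$ coming from $H$. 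In each such local model the linking number homomorphism sends the meridians around $V$ and $H$ to $t$ and $t^{-d}$ in $\Gamma$, so a K\"unneth-type computation reduces the relative cohomology to a tensor product of two-term complexes $\Gamma\xrightarrow{t^k-1}\Gamma$ with $k\in\{1,d\}$; since both $t-1$ and $t^d-1$ are non-zero-divisors in $\Gamma$, these complexes are concentrated in a single degree, and a direct bookkeeping shows this degree lies precisely in the range allowed by $\overline{m}$.

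The $\ast$-version follows formally from the $!$-version via Verdier duality. Indeed, $\overline{m}$ and $\overline{l}$ are superdual in the sense of Cappell--Shaneson, so that $\mathcal{D}\bigl(IC^\bullet_{\overline{m}}(\CP,\K)\bigr)\cong IC^\bullet_{\overline{l}}(\CP,\K^{op})$; on the other hand, the identity $\mathcal{D}\K\cong \K^{op}[2n+2]$ together with the standard exchange relation $\mathcal{D}\circ j_!\cong Rj_\ast\circ \mathcal{D}$ yields $\mathcal{D}(j_!\K[2n+2])\cong Rj_\ast\K^{op}[2n+2]$. Dualizing the first quasi-isomorphism and then replacing $\K^{op}$ by $\K$ --- which is legitimate because the hypotheses are symmetric under the involution $t\leftrightarrow t^{-1}$ --- produces the second.

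The hard part is the costalk computation at the deepest strata, in particular at points of $V\cap H$ lying in the singular locus of $V$, where two a priori independent meridians interact. Transversality at infinity is used in an essential way at this step: it ensures a local product decomposition along $H$ that decouples the meridian around $H$ from the monodromy coming from $V$, so that the local relative cohomology splits off an exact factor governed by $t^d-1$ and one is reduced to the usual intersection cohomology bookkeeping for the Whitney stratification of $V$ alone.
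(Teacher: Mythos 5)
First, note that the paper does not prove this statement at all: it is quoted verbatim from \cite[Lemma 3.1]{LM2} (building on \cite{CS}), so there is no internal proof to compare with --- your attempt has to be judged on its own.

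Your overall architecture (verify the Deligne axioms for one extension, then obtain the other by Verdier duality from the superduality $\overline{m}(s)+\overline{l}(s)=s-1$ together with $\mathcal{D}\K\cong\K^{op}[2n+2]$ and the $t\leftrightarrow t^{-1}$ symmetry) is sound. The gap is in the local computation, in two places. First, for a stratum $S$ of codimension $>1$ inside $V\cup H$, the costalk $i_S^!\,j_!\K$ is \emph{not} $H^\ast(B,B\setminus(V\cup H);\K)$; that group computes $i_x^\ast i_{V\cup H}^!\,j_!\K\cong i_x^\ast Rj_\ast\K[-1]$, whereas the cosupport axiom requires $\iota_S^!$ of this complex, i.e.\ the relative cohomology of the link \emph{pair}. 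Second, and more seriously, at a point of a singular stratum of $V$ of complex codimension $c$ there is no normal-crossing local model, and the group $H^\ast(B\setminus(V\cup H);\K)$ is \emph{not} a tensor product of two-term complexes $\Gamma\xrightarrow{t^k-1}\Gamma$ with $k\in\{1,d\}$: it is governed by the local Alexander modules of the transverse singularity (already for the cusp $x^2=y^3$ one gets a summand $\Gamma/(t^2-t+1)$, which your model cannot produce). Deferring this case to ``the usual intersection cohomology bookkeeping for the Whitney stratification of $V$ alone'' is circular --- that bookkeeping \emph{is} the theorem for $V$ in place of $V\cup H$, and it is false for a general local system on the complement of a singular divisor. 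What actually makes it work, and what your argument never invokes, is the special structure of $\K$: the local infinite cyclic cover of $B\setminus V$ is the local Milnor fibre $F_x$, whose rational homology is finite dimensional (hence $\Gamma$-torsion, so $\mathrm{Hom}_\Gamma(H_\ast(F_x),\Gamma)=0$) and concentrated in degrees $\le c-1$; equivalently, one can argue that $j$ is an affine open inclusion so that $j_!\K[n+1]$ and $Rj_\ast\K[n+1]$ are perverse by Artin vanishing, and then check that the resulting support/cosupport bounds are exactly the $\overline{m}$- and $\overline{l}$-axioms. Supplying this input (as \cite{CS} and \cite{LM2} do) is the essential missing step; the rest of your outline, including the use of transversality to split off the $\Gamma\xrightarrow{t^d-1}\Gamma$ factor along $H$, is fine.
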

In the above notations, the Cappell-Shaneson superduality isomorphism
can be stated as (cf. \cite[Theorem 3.3]{CS}):
\be IC_{\overline{m}}^{\bullet}(\CP,\K)^{op}\cong \mathcal{D} (IC_{\overline{l}}^{\bullet}(\CP,\K))[2n+2] ,\ee where if $A$ is a complex of sheaves, $A^{op}$ is the $\Gamma$-module obtained from the $\Gamma$-module $A$ by composing all module structures with the involution $t \to t^{-1}$.

\begin{definition}\label{defpc}
 The {\it peripheral complex} $\R \in D_{c}^{b}(\CP, \Gamma) $ is defined by the distinguished triangle (see \cite{CS}) \begin{equation}
 IC_{\overline{m}}^{\bullet}(\CP,\K)\rightarrow IC_{\overline{l}}^{\bullet}(\CP,\K)\rightarrow \R[2n+2] \overset{[1]}{\rightarrow},
  \end{equation} or, by using Theorem \ref{thmax}, by 
  \begin{equation}
  j_{!}\K \rightarrow Rj_{\ast}\K \rightarrow \R  \overset{[1]}{\rightarrow}.
  \end{equation}
  \end{definition}
Then, up to a shift,  $\R$ is a self dual (i.e., $ \R \cong \mathcal{D}\mathcal{R}^{\bullet op}[-2n-1] $), torsion (i.e., the stalks of its cohomology sheaves are torsion modules) perverse sheaf on $\CP$ (see \cite[section 3.2]{LM2}). In fact, $\R$ has compact support on $V\cup H$, and 
\be\label{sup} \R\vert _{V\cup H} \cong  (Rj_{\ast} \K)\vert_{V\cup H}.\ee 
\begin{remark} The peripheral complex $\R$ as defined here corresponds to  $\R[-2n-2]$  in the notations of Cappell and Shaneson, see \cite{CS} or \cite{LM2}. 
\end{remark}


 \subsection{The Sabbah specialization complex}\label{sp} The Sabbah specialization complex (cf. \cite{Sab}, and its reformulation in \cite{Bu2}) can be regarded as a generalization of Deligne's nearby cycle complex. For a quick introduction to the theory of nearby cycles, the reader is advised to consult \cite{D2} and \cite{Ma}. 
 
Let us recall the relevant definitions. Consider the following commutative diagram of spaces and maps:\begin{center}
$\xymatrix{
F_{0} \ar[r]^{i}  \ar[d]^{f}  & \CN  \ar[d]^{f}   & \ar[l]_{l} \U \ar[d]^{f}  & \ar[l]_{\pi} \U^{c} \ar[d]^{\widehat{f}} \\
        \{0\}    \ar[r]      & \C       & \ar[l] \C^{\ast}      & \ar[l]_{\widehat{\pi}} \widehat{\C^{\ast}} 
}$
\end{center}
where $\widehat{\pi}$ is the universal covering of the punctured disk $\C^{\ast}$, and the right-hand square of the diagram is cartesian.

\begin{definition} The {\it  Sabbah specialization functor}  of $f$ is defined by 
$$\psi^{S}_{f} =i^{\ast}Rl_{\ast} R\pi_{!} (l \circ \pi)^{\ast} : D^{b}_{c}(\CN,\Q) \rightarrow D^{b}_{c}(F_{0},\Gamma),$$ 
 and we call $\psi^{S}_{f} \Q_{\CN}$ the {\it Sabbah specialization complex}.
\end{definition}
\br The definition of the Sabbah specialization complex  is slightly different from that of the nearby cycle complex, where $R\pi_{!}$ is replaced by $R\pi_{\ast}$. \er

For short, in the following we write $\Q$ for the constant sheaf $\Q_{\CN}$ on $\CN$.

Consider the natural forgetful functor
$${\rm\it for}:  D^{b}_{c}(F_{0},\Gamma)\rightarrow D^{b}_{c}(F_{0},\C),$$
which maps a torsion $\Gamma$-module sheaf complex to its underlying $\Q$-complex. Let $\psi_{f}\Q$ be the Deligne nearby cycle complex associated to $f$. It is known that  one has a non-canonical isomorphism (cf. \cite[page 13]{Br}): 
\be\label{for} {\rm\it for} \circ \psi^{S}_{f} (\Q) \cong  \psi_{f}\Q [-1].\ee

The next result is a direct consequence of \cite[ Lemma 3.4(b)]{Bu2}.
\begin{lem}(\cite[Section 2.4]{Liu}) We have a quasi-isomorphism in $D^{b}_{c}(F_{0},\Gamma)$:
\be \R\vert_{F_{0}}\cong \psi^{S}_{f}{\Q}.\ee 
Moreover, we have the following distinguished triangle in $D^{b}_{c}(\CN,\Gamma)$: \begin{equation}\label{2.4}
l_{!}\K \rightarrow Rl_{\ast}\K \rightarrow   i_{!}\psi^{S}_{f}\Q \overset{[1]}{\rightarrow}.
\end{equation}
\end{lem}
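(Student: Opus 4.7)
The plan is to derive both statements simultaneously by restricting the defining triangle of the peripheral complex from $\CP$ to $\CN$ and then to $F_0$, and identifying the cone with Sabbah's complex via a direct computation of $R\pi_! (l\pi)^*\Q$.

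First I would unpack Sabbah's definition. Since $\pi\colon\U^c\to\U$ is a $\Z$-covering, $R\pi_!$ is concentrated in degree zero and its stalk at $x\in\U$ is $\bigoplus_{\pi^{-1}(x)}\Q$. Tracking the deck-transformation action, this identifies as $\Gamma$-modules with the fiber $\Gamma$ of the linking-number local system. In other words, there is a canonical $\Gamma$-sheaf isomorphism
\be\label{eq:plan1} R\pi_!(l\pi)^*\Q \;=\; \pi_!\Q_{\U^c} \;\cong\; \K \ee
on $\U$ (this is exactly the content of \cite[Lemma 3.4(b)]{Bu2}, alluded to in the hint). Substituting \eqref{eq:plan1} into the definition of the Sabbah complex yields
\be\label{eq:plan2} \psi^S_f\Q \;=\; i^* Rl_*\K. \ee

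Next I would restrict the defining triangle $j_!\K\to Rj_*\K\to\R\xrightarrow{[1]}$ on $\CP$ to the open set $\CN$. Writing $k\colon\CN\hookrightarrow\CP$ for the open inclusion, we have $j=k\circ l$, so $k^*j_!\K = l_!\K$ and $k^*Rj_*\K = Rl_*\K$ because $k$ is an open embedding. This gives the distinguished triangle
\be\label{eq:plan3} l_!\K \longrightarrow Rl_*\K \longrightarrow \R\vert_{\CN} \xrightarrow{[1]} \ee
in $D^b_c(\CN,\Gamma)$. By \eqref{sup}, the peripheral complex $\R$ has support contained in $V\cup H$, and $\CN\cap(V\cup H)=F_0$, so $\R\vert_{\CN}$ is supported on $F_0$ and hence
\be\label{eq:plan4} \R\vert_{\CN} \;\cong\; i_!\, i^*(\R\vert_{\CN}) \;=\; i_!(\R\vert_{F_0}). \ee

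Finally I would pin down $\R\vert_{F_0}$. Applying $i^*$ to \eqref{eq:plan3} and using that $l_!\K$ vanishes on $F_0$ (since it is extended by zero from $\U$, and $F_0\cap\U=\emptyset$), the connecting map becomes an isomorphism
\[
\R\vert_{F_0} \;=\; i^*(\R\vert_{\CN}) \;\cong\; i^*Rl_*\K \;\cong\; \psi^S_f\Q,
\]
where the last step is \eqref{eq:plan2}. This is the first assertion. Plugging this identification back into \eqref{eq:plan4} and then into \eqref{eq:plan3} gives the distinguished triangle \eqref{2.4} of the second assertion. The only genuinely nontrivial step is \eqref{eq:plan1}---compatibility between the deck action on $\pi_!\Q_{\U^c}$ and the monodromy defining $\K$---but this is precisely what \cite[Lemma 3.4(b)]{Bu2} provides, so everything else is formal manipulation with the adjoint six-functor formalism and the support property \eqref{sup}.
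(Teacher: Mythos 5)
Your argument is correct and is essentially the paper's intended route: the paper states this lemma without proof, citing \cite[Lemma 3.4(b)]{Bu2} and \cite[Section 2.4]{Liu}, and your derivation is precisely the unpacking of that citation --- identifying $R\pi_!(l\pi)^*\Q$ with $\K$ so that $\psi^S_f\Q\cong i^*Rl_*\K$, then restricting the defining triangle of $\R$ to $\CN$ and using that the cone of $l_!\K\to Rl_*\K$ is supported on $F_0$. You also correctly isolate the one non-formal point (the $\Gamma$-equivariance of $\pi_!\Q_{\U^c}\cong\K$) as the content of Budur's lemma.
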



\section{Peripheral Complex as a Mixed Hodge Module}\label{per}
In this section, we give a new characterization of the peripheral complex, and show that (up to a shift) it underlies a mixed Hodge module. For a quick introduction to the category of mixed Hodge module, the reader is advised to consult \cite{Sa}.

Let $h=f_{d}$ be the top degree part of $f$, with corresponding Milnor fibre $F_{h}=\{h=1\}$. Then, it is shown in \cite{LM2} that $\R\vert_{H \setminus H\cap V}$ is a local system  $\K(h)$ with stalk $\Gamma/(t^{d}-1)$ placed in degree $1$,  i.e., 
 \be \R\vert_{H\setminus H\cap V}\cong \K(h)[-1].\ee

\begin{thm}\label{new}  Assume that the polynomial $f: \CN\rightarrow \C$ is transversal at infinity, and let $V$ be the projective completion of $F_0=\{f=0\}$.  Let $i_{v}$ be the inclusion of $F_{0}$ into $V$, and $i_{h}$ be the inclusion of $H \setminus V\cap H$ into $H$. Then
\begin{equation}\label{3.1}
\R\vert_{V}\cong Ri_{v \ast} \psi^{S}_{f}\C.
\end{equation}
\begin{equation}\label{3.2}
\R\vert_{H} \cong Ri_{h \ast}\K(h)[-1].
\end{equation}
\end{thm}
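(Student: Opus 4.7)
The plan is to derive both statements from the defining triangle of the peripheral complex $\R$, combined with the triangle (\ref{2.4}) and local computations at points of $V \cap H$ based on the stratified transversality of $V$ to $H$. Restricting the defining triangle $j_! \K \to Rj_* \K \to \R \xrightarrow{[1]}$ to $V$ (resp.\ to $H$), and using $\U \cap V = \U \cap H = \emptyset$, immediately gives $\R|_V \cong (Rj_* \K)|_V$ and $\R|_H \cong (Rj_* \K)|_H$. It thus remains to identify $(Rj_* \K)|_V$ and $(Rj_* \K)|_H$ with the asserted pushforwards.

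For (\ref{3.1}), I would factor $j = k \circ l$ with $l : \U \hookrightarrow \CN$ and $k : \CN \hookrightarrow \CP$ the open inclusions, and note that $k \circ i = \tilde{k} \circ i_v$ with $\tilde{k}: V \hookrightarrow \CP$ the closed inclusion. Applying $Rk_*$ to (\ref{2.4}) yields the distinguished triangle
\begin{equation*}
Rk_* l_! \K \to Rj_* \K \to R\tilde{k}_* Ri_{v*}\psi^S_f \Q \xrightarrow{[1]}
\end{equation*}
on $\CP$. Pulling back via $\tilde{k}^*$ and using $\tilde{k}^* R\tilde{k}_* = \mathrm{id}$ gives
\begin{equation*}
(Rk_* l_! \K)|_V \to \R|_V \to Ri_{v*} \psi^S_f \Q \xrightarrow{[1]},
\end{equation*}
so (\ref{3.1}) reduces to the vanishing $(Rk_* l_! \K)|_V = 0$. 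On $F_0 \subset \CN$ this is immediate since $l_! \K$ vanishes on $F_0$. At $x \in V \cap H$, the stalk equals $\varinjlim_B R\Gamma(B \cap \CN, l_! \K)$ with $B$ a fundamental system of open neighborhoods of $x$ in $\CP$, and the triangle (\ref{2.4}) applied to sections over $B \cap \CN$ shows that this vanishing is equivalent to the natural map $\varinjlim_B R\Gamma(B \cap \U, \K) \to \varinjlim_B R\Gamma(B \cap F_0, \psi^S_f \Q)$ being an isomorphism. Stratified transversality of $V$ to $H$ enters at this point, providing a local product decomposition near $x$ in which a K\"unneth argument splits the cohomology, with the factor transverse to $H$ realizing $\psi^S_f \Q$ on $F_0$.

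For (\ref{3.2}), the identification $\R|_{H \setminus V \cap H} \cong \K(h)[-1]$ recalled just before the theorem provides, via adjunction for the open inclusion $i_h$, a canonical map $\R|_H \to Ri_{h*} \K(h)[-1]$ that is automatically an isomorphism on $H \setminus V \cap H$. The problem again reduces to a local check at each $x \in V \cap H$: in a local product decomposition in which $\U$ is a product of two transverse punctured directions (one encircling $V$, one encircling $H$) times a ball in the remaining coordinates, a K\"unneth computation identifies $(Rj_* \K)_x$ with the cohomology of a punctured neighborhood of $V \cap H$ inside $H$ twisted by $\K(h)$, shifted by $[-1]$. The main obstacle will be exactly this local analysis at $x \in V \cap H$, especially where $V$ is singular. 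Near a smooth point of $V$ meeting $H$ transversally, one can carry out an explicit K\"unneth computation: the linking number monodromies of $\K$ have eigenvalues $t$ along $V$ and $t^{-d}$ along $H$, so both sides of each asserted isomorphism reduce to the same shifted $\Gamma$-module. The singular case is handled by a stratum-wise Thom-Mather product decomposition granted by stratified transversality, reducing everything to the smooth transverse model.
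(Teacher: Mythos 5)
Your global skeleton for (\ref{3.1}) is the same as the paper's: apply $Rk_{\ast}$ to the triangle (\ref{2.4}), identify the middle term with $Rj_{\ast}\K$ and the third term with $Rv_{\ast}Ri_{v\ast}\psi^S_f\Q$, restrict to $V$, and reduce everything to the single vanishing statement $(Rk_{\ast}l_!\K)\vert_V=0$. That reduction is correct. The difference is in how that vanishing is obtained, and this is where your proposal has a genuine gap. The paper does not compute stalks at all: it invokes the base change isomorphism $l'_!\circ Rk'_{\ast}\cong Rk_{\ast}\circ l_!$ for the Cartesian square formed by $\U\hookrightarrow\CN\hookrightarrow\CP$ and $\U\hookrightarrow\CP\setminus V\hookrightarrow\CP$ (valid precisely because $V$ meets $H$ transversally; \cite[Lemma 6.0.5]{Sc}), after which the vanishing on $V$ is formal, since $l'_!$ extends by zero across $V$. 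You instead propose to verify the vanishing by a stalk computation at points of $V\cap H$, via a local product decomposition and a K\"unneth argument. This is essentially re-proving the base change isomorphism by hand, and the two sentences you devote to it do not constitute a proof: you must (a) produce a stratified local product structure for the triple $(\CP,V,H)$ near an arbitrary (possibly singular) point of $V\cap H$, and (b) identify, under that product structure, the Sabbah specialization complex of the global polynomial $f$ with the transverse local model, including its $\Gamma$-module structure. Neither step is routine when the singular locus of $V$ meets $H$, and your appeal to "a stratum-wise Thom--Mather product decomposition" only names the tool without carrying out the identification of $\psi^S_f\Q$ that the argument actually requires. The same criticism applies to your treatment of (\ref{3.2}): the adjunction map $\R\vert_H\to Ri_{h\ast}\K(h)[-1]$ is the right map to consider, but showing it is an isomorphism on stalks along $V\cap H$ is again exactly the transversality-dependent local statement that you assert rather than prove (the paper obtains it by the symmetric base change argument).

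To repair the proposal with minimal changes, replace the local analysis by the citation to the base change isomorphism for transverse inclusions: then $(Rk_{\ast}l_!\K)\vert_V=(l'_!Rk'_{\ast}\K)\vert_V=0$ is immediate, and the rest of your argument for (\ref{3.1}) goes through verbatim; for (\ref{3.2}) run the same argument with the roles of $V$ and $H$ interchanged. Alternatively, if you insist on the stalk-wise route, you need to write out the K\"unneth computation in a neighborhood of a point of $V\cap H$ lying in an arbitrary stratum of $V$, keeping track of the $\Gamma$-local system $\K$ (monodromy $t$ around $V$, $t^{-d}$ around $H$) and proving that the resulting transverse factor computes the costalk of $\psi^S_f\Q$; as it stands, that is the entire content of the theorem and it is missing.
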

\begin{proof} Let us only prove (\ref{3.1}), as (\ref{3.2}) can be obtained in a similar manner.
 Consider the following commutative diagram of inclusions:
\begin{center}
$\xymatrix{
\U  \ar[r]^{l}    \ar[d]^{k^{\prime}}  \ar[rd]^{j}          & \CN \ar[d]^{k}  \\
\CP \setminus V         \ar[r]^{l^{\prime}}       & \CP
}$
\end{center}
Since $V$ intersects $H$ transversally, there exists a base change isomorphism associated with this diagram (\cite[Lemma 6.0.5]{Sc}):
\begin{equation}\label{3.3b}
l^{\prime}_{!}\circ Rk^{\prime}_{\ast}=Rk_{\ast}\circ l_{!}.
\end{equation}
Let $v$ be the inclusion of $V$ into $\CP$.
Note that $Rl^{\prime}_{\ast} Rk^{\prime}_{\ast}\K=Rj_{\ast}\K$ and, by Section \ref{pc}, we have that $\R\vert_{V}=(Rj_{\ast}\K)\vert_{V}$. Then we have a distinguished triangle:\begin{equation}\label{3.4}
l^{\prime}_{!}Rk^{\prime}_{\ast}\K \rightarrow Rl^{\prime}_{\ast} k^{\prime}_{\ast}\K \rightarrow v_{!}(\R\vert_{V}) \overset{[1]}{\rightarrow}.
\end{equation}
Using the base change isomorphism (\ref{3.3b}) and the commutativity of the above diagram, the distinguished triangle (\ref{3.4}) can be written as:
 \begin{equation}\label{3.5}
Rk_{\ast} l_{!}\K \rightarrow Rk_{\ast} Rl_{\ast}\K \rightarrow v_{!}(\R\vert_{V}) \overset{[1]}{\rightarrow}.
\end{equation} 
Recall now that there is a distinguished triangle (\ref{2.4}):
\be l_{!}\K \rightarrow Rl_{\ast}\K \rightarrow i_{!} \psi^{S}_{f}\Q,\ee
 where $i$ is the inclusion of $F_{0}$ into $\CN$. By applying the functor $Rk_{\ast}$ to this triangle, we obtain the distinguished triangle:
 \begin{equation} \label{3.6}
Rk_{\ast} l_{!}\K \rightarrow Rk_{\ast} Rl_{\ast}\K \rightarrow Rk_{\ast}i_{!} \psi^{S}_{f}\Q \overset{[1]}{\rightarrow} .
\end{equation}
So, by comparing the two triangles (\ref{3.5}) and (\ref{3.6}), we get the following quasi-isomorphism:  
\begin{equation}\label{3.7}
v_{!}(\R\vert_{V})\cong Rk_{\ast} i_{!} \psi^{S} _{f}\Q.
\end{equation}
Since $F_{0}$ is closed in $\CN$, we have $i_{!}=Ri_{\ast}$.   So for $i_{v}$ the inclusion of $F_{0}$ into $V$, we have $v\circ i_{v}=k\circ i$, hence
\be\label{3.8} Rk_{\ast}i_{!} \psi^{S}_{f}\Q=Rk_{\ast}Ri_{\ast} \psi^{S}_{f}\Q=Rv_{\ast}Ri_{v \ast} \psi^{S}_{f}\Q.\ee
Finally, by applying $v^{\ast}$ to equation (\ref{3.7}), and using (\ref{3.8}) and the standard identities  $v^{\ast}v_{!}=id$ and $v^{\ast}Rv_{\ast}=id$, we get: 
\be \R\vert_{V}\cong Ri_{v \ast}\psi^{S}_{f}\Q.\ee
\end{proof}

The result of Theorem \ref{new} above can be used to endow the peripheral complex with  a mixed Hodge module structure.
Recall that  there is a  natural forgetful functor  ${\rm\it for}:  D^{b}_{c}(X,\Gamma)\rightarrow D^{b}_{c}(X,\Q)$, which assigns to a torsion complex of $\Gamma$-sheaves its underlying $\Q$-complex. In what follows, we will use the same notation for a $\Gamma$-complex $\mathcal{A}^{\bullet}$ and for its underlying $\Q$-complex ${\rm\it for}(\mathcal{A}^{\bullet})$.

After applying the forgetful functor to (\ref{3.1}), and by using (\ref{for}), we obtain the following quasi-isomorphism of complexes of $\Q$-sheaves:
\be\label{ds} \R \vert_{V}\cong Ri_{v \ast}\psi_{f}\Q[-1] .\ee
Also note that the local system $\K(h) \in D^{b}_{c}(H\setminus H\cap V ,\Q)$ is induced by the natural $d$-fold cover map $p: F_{h}\rightarrow (H\setminus V\cap H)$ or, more precisely, 
\be\label{loc}
\K(h)\cong  Rp_{\ast}\Q_{F_{h}}\in D^{b}_{c}(H \setminus H\cap V ,\Q).\ee

\begin{remark} Since $h=f_d$ is a degree $d$ homogeneous polynomial function on $\CN$, the hypersurface $V_{h}=\overline{\lbrace h=0 \rbrace}\subseteq \CP$ is already transversal to the hyperplane at infinity $ H=\lbrace x_{0}=0 \rbrace $. Let $\R_{h}$ be the peripheral complex  associated to $h$. 
Then  (\ref{3.2}) implies that $\R\vert_{ H}=\R_{h }\vert _{H}$.
 \end{remark}

We can now prove the following result:
 \begin{cor} \label{3.3} The peripheral complex $\R$ underlies a (shifted) algebraic mixed Hodge module.
 \end{cor}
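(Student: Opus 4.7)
The plan is to use Theorem \ref{new} to describe $\R$ locally on $V$ and $H$ as the output of operations that preserve mixed Hodge modules, and then to glue these two local descriptions over $V\cup H$, which contains the support of $\R$. The fact that $\R$ is already known to be a (shifted) perverse sheaf will then upgrade a priori membership in $D^{b}\MHM$ to being a shifted MHM itself.

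For $\R\vert_{V}$: by the forgetful version of (\ref{3.1}) --- namely (\ref{ds}) --- we have $\R\vert_{V}\cong Ri_{v\ast}\psi_{f}\Q[-1]$. Saito's theory endows $\psi_{f}\Q[-1]$ with the structure of a (shifted) MHM on $F_{0}$, coming from the lift $\psi_{f}^{H}$ of nearby cycles to MHMs; and since $i_{v}:F_{0}\hookrightarrow V$ is a closed embedding, $Ri_{v\ast}$ preserves MHMs, so $\R\vert_{V}$ is a shifted MHM on $V$. For $\R\vert_{H}$: using (\ref{3.2}) and (\ref{loc}), $\R\vert_{H}\cong Ri_{h\ast}(Rp_{\ast}\Q_{F_{h}})[-1]$, where $p:F_{h}\to H\setminus V\cap H$ is a finite \'etale cover of smooth varieties. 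Since $\Q_{F_{h}}$ is (a shift of) the constant MHM on $F_{h}$, and $Rp_{\ast}$ preserves MHMs for proper $p$, while $Ri_{h\ast}$ preserves $D^{b}\MHM$ for the open embedding $i_{h}$, $\R\vert_{H}$ lies in $D^{b}\MHM(H)$.

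To glue, recall from (\ref{sup}) that $\R$ is supported on $V\cup H$, in which $V$ is closed with open complement $H\setminus V\cap H$. The canonical open--closed distinguished triangle
\[
j_{!}(\R\vert_{H\setminus V\cap H})\to \R\vert_{V\cup H}\to i_{\ast}(\R\vert_{V})\overset{+1}{\longrightarrow}
\]
realizes $\R\vert_{V\cup H}$ as the cone of a morphism between complexes of MHMs on $V\cup H$. Pushing forward by the closed embedding $V\cup H\hookrightarrow\CP$ then gives $\R\in D^{b}\MHM(\CP)$. Since $\R$ is a shifted perverse sheaf (cf.\ the remark after Definition \ref{defpc}), and the forgetful functor $D^{b}\MHM\to D^{b}_{c}$ is $t$-exact for the perverse $t$-structure, $\R$ lies up to shift in the heart of the MHM $t$-structure, i.e.\ is a shifted MHM.

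The main obstacle is the gluing step: the connecting morphism in the distinguished triangle above must be promoted to a morphism of mixed Hodge modules; equivalently, the two structures placed on $\R\vert_{V\cap H}$ --- one inherited from nearby cycles on $V$, the other from proper pushforward of $\Q_{F_{h}}$ on $H$ --- must agree. This compatibility should follow from Saito's base change theorems applied to the commutative diagram in the proof of Theorem \ref{new}, since both structures ultimately descend from the same six-functor operations; but its rigorous verification is the technical crux of the argument.
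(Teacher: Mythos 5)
Your decomposition of $\R$ over $V\cup H$ into the closed piece $\R\vert_V\cong Ri_{v\ast}\psi_f\Q[-1]$ and the open piece $\R\vert_{H\setminus V\cap H}\cong\K(h)[-1]\cong Rp_\ast\Q_{F_h}[-1]$ is exactly the paper's, and you correctly identify the gluing step as the crux --- but you leave that step open, and your proposed way of closing it misdiagnoses the problem. There is no question of ``two structures on $\R\vert_{V\cap H}$ agreeing'': the sets $H\setminus V\cap H$ and $V$ are disjoint and partition $V\cup H$, so nothing is glued over an overlap. The genuine issue is that the cone of a morphism between objects of $D^b\MHM$, formed in $D^b_c$, is not automatically an object of $D^b\MHM$ unless the morphism itself is lifted to $D^b\MHM$; Saito's base-change theorems do not by themselves supply such a lift, and your final appeal to $t$-exactness of the forgetful functor presupposes the very membership in $D^b\MHM$ that is in question.

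The paper closes this gap by a different, and essentially formal, mechanism, and you need both of its ingredients. First, it verifies that after the shift $[n+1]$ both outer terms of the triangle are honest perverse sheaves, not merely complexes: $s$ and $i_v$ are quasi-finite affine, so $s_!$ and $Ri_{v\ast}$ preserve perversity (\cite[Corollary 5.2.17]{D2}), and $r$ is proper. Hence $\R[n+1]$ sits in a short exact sequence in ${\rm Perv}_\Q$ between the underlying perverse sheaves ${\rm For}(C)$ and ${\rm For}(A)$ of two mixed Hodge modules, i.e.\ it is classified by a class in $Y{\rm Ext}^1({\rm For}(C),{\rm For}(A))$. Second, by Saito's theorem (\cite[Theorem 2.10]{Sa1}) the forgetful map ${\rm Ext}^i(C,A)\to{\rm Ext}^i({\rm For}(C),{\rm For}(A))$ is surjective, and Yoneda Ext agrees with derived-category Ext for these noetherian/artinian abelian categories; therefore \emph{every} extension of ${\rm For}(A)$ by ${\rm For}(C)$ in ${\rm Perv}_\Q$ underlies an extension in $\MHM$. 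This makes the lift automatic --- no compatibility of the connecting morphism needs to be checked --- and it is precisely the step missing from your argument.
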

 \begin{proof}
For the purpose of this proof only, we switch to the perverse conventions used in Saito's theory, according to which $\R[n+1]$ is a perverse sheaf on $\CP$. All sheaf complexes appearing  in this proof will be complexes of $\Q$-sheaves (i.e., we apply the forgetful functor to all $\Gamma$-sheaf complexes).

Consider the inclusions $H \setminus V \cap H \overset{s}{\hookrightarrow} V \cup H \overset{r}{\hookleftarrow} V$,
and the associated distinguished triangle in $D^b_c(V \cup H,\Q)$:
\be s_!\R|_{H \setminus V \cap H}[n+1] \to \R[n+1] \to r_*\R|_V[n+1] \overset{[1]}{\to}
\ee
Recall that $\R|_{H \setminus V \cap H} \cong \K(h)[-1]$, while by (\ref{ds}) we have that: $\R|_V \cong Ri_{v \ast}\psi_{f}\Q[-1]$. Since $\R|_{H \setminus V \cap H}[n+1] \cong \K(h)[n]$ is a perverse sheaf on $H \setminus V \cap H$, and $s$ is a quasi-finite affine map, it follows from \cite[Corollary 5.2.17]{D2} that $s_!\R|_{H \setminus V \cap H}[n+1]$ is a perverse sheaf on $V \cup H$. Moreover, we deduce by (\ref{loc}) that $\K(h)[n]$, hence also $s_!\R|_{H \setminus V \cap H}[n+1]$ underly algebraic mixed Hodge modules. Since $\Q[n+1]$ is a perverse sheaf on $\C^{n+1}$ underlying a mixed Hodge module, and the functor $\psi_f[-1]$ preserves perverse sheaves (and mixed Hodge modules), it follows that $(\psi_{f}[-1])(\Q[n+1])$ is a perverse sheaf on $F_0$ underlying  a mixed Hodge module. Moreover, as $i_v$ is a quasi-finite affine morphism, it follows as above that $\R|_V[n+1]\cong Ri_{v \ast}(\psi_{f}[-1])(\Q[n+1])$ is a perverse sheaf on $V$ underlying  a mixed Hodge module. Finally, since $r$ is proper, $r_!=r_*$ preserves perverse sheaves and resp. mixed Hodge modules, so $r_*\R|_V[n+1]$ is a perverse sheaf on $V \cup H$ underlying a mixed Hodge module.

The above considerations show that $\R[n+1]$ can be regarded as an extension of perverse sheaves, both of which underly mixed Hodge modules. So $\R[n+1]$ is 
 an element in the first Yoneda Extension group $Y{\rm Ext}^1({\rm For}({C}),{\rm For}(A))$, for suitable mixed Hodge modules $C$ and $A$ as described above, and where ${\rm For}:\MHM \to {\rm Perv}_{\Q}$ denotes the forgetful functor assigning to a mixed Hodge module the corresponding rational sheaf complex. Since Yoneda Ext groups $Y{\rm Ext}^i$ agree with the derived category ${\rm Ext}$ groups
${\rm Ext}^i={\rm Hom}(-,-[i])$ for noetherian or artinian abelian categories
such as  $\MHM$ or ${\rm Perv}_{\Q}$, and the forgetful functor $${\rm For}: {\rm Ext}^i(C,A) \to {\rm Ext}^i({\rm For}({C}), {\rm For}(A))$$ is surjective for all $i$ for
given mixed Hodge modules $A$ and $C$ (cf. \cite[Theorem 2.10]{Sa1}), it follows  that $\R[n+1]$ underlies a mixed Hodge module.
 \end{proof}


\section{Error estimates for Alexander polynomials}\label{error}
In this section, we give several error estimates for Alexander polynomials of hypersurface complements.

\bp  \label{p4.1} In our notations, we have  $\Gamma$-module isomorphisms  \begin{equation}\label{4.1}
H^{2n+1-i}(\CP;\R) \cong \left\{ \begin{array}{ll}
H_{i}(\U^{c}), & i<n, \\
\overline{ H_{2n-i}(\U^{c})}, & i>n,\\
\end{array}\right.
\end{equation}
 and an exact sequence of  $\Gamma$-modules for $i=n$: \begin{equation}\label{4.2}
0\rightarrow \Gamma^{\mu}\rightarrow \Gamma^{\mu}\oplus \overline{ H_{n}({\U}^{c})} \rightarrow  H^{n+1}(\CP;\R)\rightarrow H_{n}({\U}^{c}) \rightarrow 0
  \end{equation} 
\ep

\begin{proof}
Consider the distinguished triangle $$Rj_{!}\K\rightarrow Rj_{\ast}\K\rightarrow \R \overset{[1]}{\rightarrow}$$ of Definition \ref{defpc}.
By  applying the hypercohomology with compact support functor, we have the following long exact sequence:
$$\cdots \rightarrow  H^{2n+1-i}(\CP;\R) \rightarrow H^{2n+2-i}(\CP;Rj_{!}\K) \rightarrow H^{2n+2-i}(\CP;Rj_{\ast}\K) \rightarrow  
\cdots $$
The claim follows from the above sequence together with the following  $\Gamma$-isomorphisms from Section \ref{ls}:
\begin{enumerate}
\item[(a)]  $H^{2n+2-i}(\CP;Rj_{!}\K)\cong H_c^{2n+2-i}(\U;\K) \cong H_{i}(\U^{c})$,
\item[(b)] $ H^{2n+2-i}(\CP; Rj_{\ast}\K)\cong H^{2n+2-i}(\U;\K) \cong \left\{ \begin{array}{ll}
0, & i<n+1 ,\\
\Gamma^{\mu}\bigoplus \overline{ H_{n}(\U^{c})}, & i=n+1,\\
\overline{ H_{2n+1-i}(\U^{c})}, & i>n+1.\\
\end{array}\right.$
\end{enumerate}
\end{proof}

Recall that $\delta_i(t)$ denotes the Alexander polynomial associated to the Alexander module $H_{i}(\U^{c})$ ($i \leq n$). 
Let $r_{i}(t)$ be the Alexander polynomial associated to the torsion $\Gamma$-module $H^{2n+1-i}(\CP;\R)$. The above proposition yields the following relationship between the polynomials $r_i$ and $\delta_i$:
\bc \label{c4.2}
 \begin{equation}\label{4.3}
 r_{i}(t)=\left\{ \begin{array}{ll}
\delta_{i}(t), & i<n, \\
\overline{ \delta_{2n-i}(t)}, & i>n.\\
\end{array}\right.
 \end{equation}
 and $\delta_{n}(t) \cdot\overline{\delta_{n}(t)}$ divides $r_{n}(t)$. 
 \ec
Set $$\varphi(t)= \dfrac{r_{n}(t)}{\delta_{n}(t)\cdot \overline{\delta_{n}(t)}}.$$

Let $F_h$ denote as before the Milnor fiber associated to the polynomial $h=f_d$, the top degree part of the polynomial $f$. 
Let $h_{i}(t)$ be the Alexander polynomial associated to $H_{i}(F_{h})$. Then it was shown in \cite[Theorem 4.7]{LM2} that  $h_{i}(t)=\delta_{i}(t$) for $i<n$,  and $\delta_{n}(t)$ divides $h_{n}(t)$. Set 
$$\varphi_{1}(t)= \dfrac{h_{n}(t)}{\delta_{n}(t)}.$$
Similarly, we let $\psi_{i}(t)$ denote the Alexander polynomial associated to the torsion $\Gamma$-module $H_{c}^{2n+1-i}(F_{0}; \psi^{S}_{f}\C)$. It was shown in \cite[Theorem 1.1]{Liu} that $\psi_{i}(t)=
\delta_{i}(t$) for $i<n$, and $\delta_{n}(t)$ divides $\psi_{n}(t)$. Set $$\varphi_{2}(t)= \dfrac{\psi_{n}(t)}{\delta_{n}(t)}.$$
As it can be seen from their definitions, the polynomials $\varphi_{1}(t)$ and $\varphi_{2}(t)$ can be regarded as error estimates for the Alexander polynomial $\delta_n(t)$.
The above polynomial invariants are related by the following result:

\begin{thm}\label{t4.1} Assume that the polynomial $f: \CN\rightarrow \C$ is transversal at infinity. With the above notations, we have the following equalities: 
\be\label{one}  r_{n}(t)=\overline{h_{n}(t)}\cdot \psi_{n}(t)= h_{n}(t)\cdot \overline{\psi_{n}(t)},\ee
\be\label{two}  \varphi(t)=\overline{ \varphi_{1}(t)} \cdot \varphi_{2}(t)=\varphi_{1}(t) \cdot \overline{ \varphi_{2}(t)}\ee
\end{thm}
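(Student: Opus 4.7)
The plan is to extract the identity from a long exact sequence on $\CP$ and the multiplicativity of Alexander polynomials. Using the open-closed decomposition $\CP = \CN \sqcup H$ with inclusions $\tilde{k}\colon \CN\hookrightarrow\CP$ and $\tilde{v}\colon H\hookrightarrow\CP$, one has the distinguished triangle
\[\tilde{k}_!(\R|_{\CN})\to \R\to \tilde{v}_*(\R|_H)\xrightarrow{[1]}\]
on $\CP$. Since $\R$ is supported on $V\cup H$, the restriction $\R|_{\CN}$ is $i_!\psi^S_f\C$ for $i\colon F_0\hookrightarrow\CN$ the closed inclusion, while Theorem \ref{new} gives $\R|_H\cong Ri_{h*}\K(h)[-1]$. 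Taking hypercohomology on $\CP$ and identifying $H^{i-1}(H;Ri_{h*}\K(h))\cong H^{i-1}(F_h;\Q)$ via the $d$-fold cover $F_h \to H\setminus V\cap H$ yields the long exact sequence
\[\cdots\to H^i_c(F_0;\psi^S_f\C)\to H^i(\CP;\R)\to H^{i-1}(F_h;\Q)\to H^{i+1}_c(F_0;\psi^S_f\C)\to\cdots.\]

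I would then truncate this LES using two Artin-type vanishings. Since $F_h$ is smooth affine of complex dimension $n$, $H^i(F_h;\Q)=0$ for $i>n$. On the other hand, $\psi^S_f\C\cong \psi_f\C[-1]$ as $\Q$-complexes and $\psi_f[-1]$ preserves perverse sheaves, so $\psi^S_f\C[n+1]$ is perverse on the affine variety $F_0$; Artin vanishing for compactly supported hypercohomology of perverse sheaves on affine varieties then forces $H^i_c(F_0;\psi^S_f\C)=0$ for $i<n+1$. Together these reduce the LES to the seven-term exact sequence of torsion $\Gamma$-modules
\[0\to H^n(\CP;\R)\to H^{n-1}(F_h)\to H^{n+1}_c(F_0;\psi^S_f\C)\to H^{n+1}(\CP;\R)\to H^n(F_h)\to H^{n+2}_c(F_0;\psi^S_f\C)\to H^{n+2}(\CP;\R)\to 0.\]

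Applying the alternating-product formula for Alexander polynomials then gives, up to units of $\Gamma$,
\[r_{n+1}\cdot h_{n-1}^{-1}\cdot \psi_n\cdot r_n^{-1}\cdot h_n\cdot \psi_{n-1}^{-1}\cdot r_{n-1}=1.\]
Substituting the known identifications $r_{n-1}=h_{n-1}=\psi_{n-1}=\delta_{n-1}$ (from Proposition \ref{p4.1}, \cite[Theorem 4.7]{LM2}, and \cite[Theorem 1.1]{Liu}) together with $r_{n+1}=\overline{\delta_{n-1}}$ (from Proposition \ref{p4.1}), and using the palindromy $\overline{\delta_{n-1}}=\delta_{n-1}$ up to units (valid because, by Theorem \ref{t2.2}, the zeros of $\delta_{n-1}$ are roots of unity and hence closed under inversion), solving for $r_n$ yields $r_n=h_n\cdot\psi_n$ up to units. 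The analogous palindromy for $h_n$ and $\psi_n$ themselves---their roots being roots of unity by the quasi-unipotent nature of the Milnor and Sabbah monodromies---gives $\overline{h_n}=h_n$ and $\overline{\psi_n}=\psi_n$ up to units, at once yielding both identities in (\ref{one}); dividing by $\delta_n\overline{\delta_n}$ and invoking the definitions of $\varphi,\varphi_1,\varphi_2$ gives (\ref{two}). The most delicate input is the perversity-based vanishing $H^i_c(F_0;\psi^S_f\C)=0$ for $i<n+1$; the remainder is assembly of the long exact sequence and algebra with the alternating-product formula.
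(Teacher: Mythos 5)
Your proof is correct, and it reorganizes the argument in a way that differs from the paper's in two respects worth noting. The paper also starts from the open--closed decomposition $F_{0}\hookrightarrow V\cup H\hookleftarrow H$ of the support of $\R$, but it extracts from the resulting long exact sequence a genuine three-term short exact sequence at $i=n$ (and a second one from the complementary decomposition $(H\setminus V\cap H)\hookrightarrow V\cup H\hookleftarrow V$), identifying the outer terms as $H_{n}(F_{h},\partial F_{h})$ and $H^{n+1}_{c}(F_{0};(\psi^{S}_{f}\Q)^{op})$ via a tubular-neighborhood computation (Lemma \ref{tl}) and duality; this is what produces the two conjugate forms $\overline{h_{n}}\cdot\psi_{n}$ and $h_{n}\cdot\overline{\psi_{n}}$ directly. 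You instead keep the full seven-term truncation of a single long exact sequence, closing it off with Artin vanishing for the perverse sheaf $\psi^{S}_{f}\C[n+1]$ on the affine variety $F_{0}$ and with $H^{j}(F_{h})=0$ for $j>n$, and then cancel the flanking terms against $\delta_{n-1}$ using Proposition \ref{p4.1}, \cite[Theorem 4.7]{LM2} and \cite[Theorem 1.1]{Liu}. This buys you two things: you never have to argue that the connecting maps vanish in order to split off a short exact sequence (the cancellation handles the extra terms automatically), and you obtain both equalities in (\ref{one}) from the single identity $r_{n}=h_{n}\cdot\psi_{n}$ together with the palindromy $\overline{h_{n}}=h_{n}$, $\overline{\psi_{n}}=\psi_{n}$ (which the paper records in Remark \ref{sr}), so the second decomposition and the duality computation for $H^{n+1}(V;\R)$ are not needed. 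The price is that your identifications of the $F_{h}$-terms use cohomology of $F_{h}$ rather than $H_{*}(F_{h},\partial F_{h})$, so the orders you write down agree with $h_{n-1},h_{n}$ only up to the involution $t\mapsto t^{-1}$ --- harmless here precisely because these polynomials are products of cyclotomic factors, but worth flagging since the statement being proved is an equality of such orders. All the inputs you invoke (the restriction formulas of Theorem \ref{new}, the covering description (\ref{loc}) of $\K(h)$, Artin vanishing, and the low-degree comparisons of $r_{i},h_{i},\psi_{i}$ with $\delta_{i}$) are available in the paper, so the argument is complete.
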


\br\label{sr}
Since the polynomials $h_{i}(t)$, $\delta_i(t)$ and $\psi_{i}(t)$  are products of cyclotomic polynomials (e.g., see \cite{Liu}),  the involution operation $\overline{\ast}$ keeps these polynomials unchanged.
\er

In the course of proving Theorem \ref{t4.1}, we need the following technical result:
\bl\label{tl} We have the following $\Gamma$-module isomorphisms for all $i$:
\be\label{30} H^{2n+1-i}_{c}(H\setminus V\cap H;\R) \cong H_i(F_h),
\ee 
\be\label{31} H^{2n+1-i}(H; \R) \cong H_i(F_h,\partial F_h)
\ee
and 
\be\label{32} H^{2n+1-i}(V\cap H; \R) \cong H_{i-1}(\partial F_h)
\ee
\el

\begin{proof}
Choose coordinates $[x_{0},\cdots ,x_{n+1}]$ for $\CP$, so that $H=\lbrace x_{0}=0 \rbrace$ is the hyperplane at infinity. Then $\mathbb{O}=[0,\cdots,0,1]$ corresponds to the origin in $\CN$. Define
\begin{center}
$\alpha: \CP\rightarrow \mathbb{R}_{+}$, \ $\alpha:= \dfrac{\vert x_{0} \vert^{2}}{\sum_{i=0}^{ n+1} \vert x_{i} \vert ^{2}}.$
\end{center}
Note that $\alpha$  is a well-defined, real analytic and proper function satisfying:
\begin{center}
$0\leq \alpha \leq 1$, $\alpha^{-1}(0)=H$ and $\alpha^{-1}(1)=\mathbb{O}.$
\end{center}
Since $\alpha$ has only finitely many critical values, there exists $\eta$ sufficiently small such that the interval $(0,\eta]$ contains no critical values. Set $$U_{\eta}=\alpha^{-1}[0,\eta).$$ Then $U_{\eta}$ is a tubular neighbourhood of $H$ in $\CP$, and note that $\CP\setminus U_{\eta}$ is a closed large ball of radius $\dfrac{1-\eta}{\eta}$ in $\CN$.  Set $$U_{\eta}^{\ast}=\alpha^{-1}(0,\eta)=U_{\eta}\setminus H.$$
Let us now consider the following commutative diagram of inclusions:
\begin{center}
$\xymatrix{
U_{\eta}^{\ast} \ar[r]^{c}    \ar[d]^{u}          & \CN \ar[d]^{k}  & \U \ar[l]^{l}\ar[dl]^{j}\\
U_{\eta}       \ar[r]^{p}       & \CP &
}$
\end{center}
and restrict the distinguished triangle (\ref{3.5}) over  $U_{\eta}$. We get a triangle:
\be\label{25}
p^*Rk_{\ast} l_{!}\K \rightarrow p^*Rk_{\ast} Rl_{\ast}\K \rightarrow p^*v_{!}(\R\vert_{V}) \overset{[1]}{\rightarrow},
\ee
where $v$ denotes as before the inclusion of $V$ in $\CP$.
Let us first give geometric interpretations to all $\Gamma$-modules appearing in the hypercohomology long exact sequence associated to (\ref{25}).
First note that  
\[
\begin{aligned}
p^* Rk_*  l_!\K[2n+2] 
& \overset{(1)}{\cong} Ru_*c^* l_!\K[2n+2] \\
& \overset{(2)}{\cong} Ru_*c^*k^*k_! l_!\K[2n+2] \\
& \cong Ru_*c^*k^*j_!\K[2n+2] \\
& \overset{(3)}{\cong} Ru_*u^*p^*IC_{\overline{m}}(\CP ,\K) \\
& \cong Ru_*IC_{\overline{m}}(U_{\eta}^{\ast} ,\K)
\end{aligned}
\]
where (1) follows from base change isomorphism  $p^{!}Rk_{\ast}= Ru_{\ast}c^{!} $ (together with  $p^{!}=p^{\ast}$, $c^{!}=c^{\ast}$, as $p$ and $c$ are both open inclusions), for (2) we use the known identity $k^*k_! \cong id$,  and (3) follows from Theorem \ref{thmax}. 

Set $$S_{\infty}=\alpha^{-1}(\eta^{\prime}),$$ for $0<\eta^{\prime}<\eta$.  Then we get as in  \cite[Theorem 4.7]{LM2}:
\[
\begin{aligned}
H^{2n+1-i}(U_{\eta}; p^{\ast}Rk_{\ast}Rl_{!}\K)
& \cong H^{-i-1}(U_{\eta}^{\ast}; IC_{\overline{m}}(U_{\eta}^{\ast} ,\K)) \\
& \overset{(1)}{\cong} H^{-i-1}(S_{\infty};IC_{\overline{m}}(U_{\eta}^{\ast} ,\K)|_{S_{\infty}}) \\
& \cong H_{i}(S_{\infty} \setminus S_{\infty}\cap V; \K) \\
& \overset{(2)}{\cong} H _{i}(F_{h}),
 \end{aligned}
\]
where (1) follows from \cite[Lemma 8.4.7(c)]{KP}, and (2) follows from the fact $V$ intersects $H$ transversally. In fact, the corresponding infinite cyclic cover of  $S_{\infty}\setminus S_{\infty}\cap V$ is homotopy equivalent to $F_{h}$, see \cite[Proposition 4.9]{LM2}. Also note that $IC_{\overline{m}}(U_{\eta}^{\ast} ,\K)|_{S_{\infty}} \cong IC_{\overline{m}}(\CP ,\K)|_{S_{\infty}}$.
Similarly, by using Theorem \ref{thmax} and duality, we have:
\[
\begin{aligned}
H^{2n+1-i}(U_{\eta}; p^{\ast}Rk_{\ast}Rl_{\ast}\K) & \cong H^{-i-1}(S_{\infty};IC_{\overline{l}}(\CP ,\K)|_{S_{\infty}}) \\ &\cong H _{i}(F_{h}, \partial F_{h}).
 \end{aligned}
\]
So, by comparing the hypercohomology long exact sequence associated to (\ref{25}) with the homology long exact sequence induced by the natural inclusion: $\partial F_{h}\rightarrow F_{h}$, and by using the above calculations, we get the following $\Gamma$-module isomorphism:
\be\label{26}
H^{2n+1-i}(S_{\infty};\R|_{S_{\infty}}) \cong H_{i-1}(\partial F_{h}).
\ee

Recall that the triangle (\ref{3.5}) was obtained from (\ref{3.4}) by a base change isomorphism, so the associated hypercohomology long exact sequences for the restrictions of these triangles over $U_{\eta}$ coincide. Note that Lemma 8.4.7(a) of \cite{KP}
 shows that for any $\mathcal{F}^{\cdot}\in D^{b}_{c}(\CP)$, there is an isomorphism:
 \be
H^{\ast}(U_{\eta}; \mathcal{F}^{\cdot} )\cong H^{\ast}(H; \mathcal{F}^{\cdot} )
\ee
So by restricting (\ref{3.4}) over $H$ we get the same hypercohomology long exact sequence as for  restricting (\ref{3.4}) and (\ref{3.5}) over $U_{\eta}$.
Let $i_{hv}$ be the inclusion of $H\cap V$ into $H$.  
By using the proper base change isomorphism (\cite[Theorem 2.3.26]{D2}) for the diagram:
\begin{center}
$\xymatrix{
& H \setminus H\cap V  \ar[r] ^{i_{h}}   \ar[d]         & H \ar[d]  & H\cap V \ar[l]_{i_{hv}} \ar[d]    \\
\U \ar[r]^{k'} & \CP\setminus V         \ar[r]^{l^{\prime}}       & \CP &  V \ar[l]_{v}
}$
\end{center} 
we have (using the notations of Theorem \ref{new}):
\be
(Rl^{\prime}_{!}Rk^{\prime}_{\ast}\K)\vert_{H} = Ri_{h !} ( (Rk^{\prime}_{\ast}\K)\vert_{H\setminus H\cap V}) = Ri_{h !} ( \R\vert _{H \setminus H\cap V} ),\ee
and 
\be
(Rv_{ !} (\R\vert_{V}))\vert _{H} = Ri_{hv !}(\R\vert _{H\cap V}).
\ee
So, the hypercohomology long exact sequence associated to the restriction of the triangle (\ref{3.4}) over $H$ becomes:
$$\cdots \to H^{2n+1-i}_{c}(H\setminus V\cap H;\R) \to H^{2n+1-i}(H; \R) \to H^{2n+1-i}(V\cap H; \R) \to \cdots$$
Therefore, by the above calculations for the restriction of  (\ref{3.5}) over $U_{\eta}$, we get the $\Gamma$-module isomorphisms:
\begin{center}
 $H^{2n+1-i}_{c}(H\setminus V\cap H;\R) \cong H_i(F_h),$
\end{center}  \begin{center}
$H^{2n+1-i}(H; \R) \cong H_i(F_h,\partial F_h)$
\end{center} and \begin{center}
$H^{2n+1-i}(V\cap H; \R) \cong H_{i-1}(\partial F_h)$
\end{center}  for all $i$.
\end{proof}

Let us now get back to the proof of Theorem \ref{t4.1}:

\noindent{\it Proof of Theorem \ref{t4.1}.} \newline
Let us consider the long exact sequence of hypercohomology with compact supports for the peripheral complex $\R$ with respect to the inclusions $$F_{0}\hookrightarrow V\cup H \hookleftarrow H.$$
By using duality and the result in \cite[Theorem 4.7]{LM2} we have that 
$$H_{i}(F_{h},\partial F_{h})\cong \overline{H_{2n-i}(F_{h})} \cong \overline{H_{2n-i}(\U^{c})}$$ for $i> n$.  On the other hand, \cite[Theorem 1.1]{Liu} yields that $$H^{2n+1-i}_{c}(F_{0}; \psi^{S}_{f}\Q) \cong H_i(\U^{c})$$ for $i<n$. By using Proposition \ref{p4.1} and vanishing results for perverse sheaves on affine spaces,  we obtain the $\Gamma$-module isomorphisms:
\begin{equation}\label{4.5}
H^{2n+1-i}_{c}(F_{0}; \psi^{S}_{f}\Q) \cong H^{2n+1-i}(V\cup H; \R), \text{ for } i< n, 
\end{equation}
\begin{equation}\label{4.6}
 H^{2n+1-i}(V\cup H; \R) \cong H^{2n+1-i}(H;\R) \cong H_{i}(F_{h},\partial F_{h}),  \text{ for } i>n,
\end{equation}
and a short exact sequence for $i=n$, \begin{equation}\label{35}
0 \rightarrow H^{n+1}_{c}(F_{0}; \psi^{S}_{f}\C) \rightarrow H^{n+1}(V\cup H; \R) \rightarrow H^{n+1}(H; \R)\rightarrow  0.
\end{equation}
Similarly, for the inclusions $(H \setminus V\cap H)\hookrightarrow V\cup H \hookleftarrow V$, there is a short exact sequence: \begin{equation}\label{36}
0 \rightarrow H^{n+1}_{c}(H \setminus V\cap H; \R) \rightarrow H^{n+1}(V\cup H; \R) \rightarrow H^{n+1}(V; \R) \rightarrow 0.
\end{equation} By using (\ref{31}) and duality, the Alexander polynomial associated to $H^{n+1}(H; \R)$ is $\overline{h_{n}(t)}$. Moreover, since $$\mathcal{D} (\psi^{S}_{f}\C)= (\psi^{S}_{f}\C)^{op}[2n+1],$$
we have by Theorem \ref{new} that 
\be
H^{n+1}(V; \R)\cong H^{n+1}(V; i_{v \ast} \psi^{S}_{f}\Q) \cong H^{n+1}(F_{0}; \psi^{S}_{f}\Q) \cong H^{n+1}_{c}(F_{0}; (\psi^{S}_{f}\Q)^{op}),\ee
where the last isomorphism follows from duality and the universal coefficient theorem. 
So the Alexander polynomial associated to $H^{n+1}(V; \R)$ is $\overline{\psi_{n}(t)}$.

Since $\R$ is supported on $V \cup H$, the result follows now by using the multiplicativity of the Alexander polynomials associated to the short exact sequences (\ref{35}) and (\ref{36}).
\hfill$\square$

\bigskip

We conclude this section with the following degree estimate:
\begin{prop} \label{4.8} Assume that the polynomial $f: \CN\rightarrow \C$ is transversal at infinity. 
\item[(a)] We have the following degree estimates: \begin{center}
$\deg \varphi_{2}\leq  \deg \varphi_{1}\leq d\cdot \mu,$
\end{center} hence $$\deg \varphi\leq 2d\cdot \mu,$$ where $\mu=\vert \chi(\U)\vert$, and $d$ is the degree of $f$.   
\item[(b)] If $F$ denotes the generic fibre of $f$, then $F_{h}$ and $F$ have isomorphic  $\Z$-homology groups.
\item[({c})]
Let $\widetilde{f}$ be the homogenization of $f$, with corresponding Milnor fiber  $\widetilde{F}=\{\widetilde{f}=1\}$. Then, if $\mu =0$, the spaces $\U^{c}$, $F$, $F_{h}$  and $\widetilde{F}$ are all homotopy equivalent to each other. 
\end{prop}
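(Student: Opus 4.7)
The plan is to address the three parts of the proposition in sequence. For (a), I would begin by invoking Theorem \ref{t4.1}, which gives $\varphi = \overline{\varphi_1}\varphi_2$; since the involution $t \mapsto t^{-1}$ preserves total degrees (cf. Remark \ref{sr}), this yields $\deg\varphi = \deg\varphi_1 + \deg\varphi_2$, so the statement reduces to establishing the chain $\deg\varphi_2 \leq \deg\varphi_1 \leq d\mu$. To obtain $\deg\varphi_1 \leq d\mu$, I would use that transversality of $V$ to $H$ forces the hyperplane section $V \cap H \subset H \cong \CP^n$ to be smooth of degree $d$, so that $\{h=0\} \subset \CN$ has an isolated singularity at the origin, and hence $F_h$ is a bouquet of $(d-1)^{n+1}$ copies of $S^n$. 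The link-at-infinity picture from the proof of Lemma \ref{tl} (the $d$-fold covering $F_h \to S_\infty \setminus S_\infty\cap V$ lifts to a map $F_h \to \U^c$) combined with $H_{n+1}(\U^c) \cong \Gamma^\mu$ fits into a Mayer--Vietoris sequence in which the defect of $H_n(F_h) \to H_n(\U^c)$ is bounded by $d \cdot \mu$. The inequality $\deg\varphi_2 \leq \deg\varphi_1$ I would extract by comparing the two short exact sequences \eqref{35} and \eqref{36} from the proof of Theorem \ref{t4.1}: they realize $H^{n+1}(\CP;\R)$ as a $\Gamma$-extension in two ways (with $\psi_n$ and $\overline{h_n}$ on one side, $h_n$ and $\overline{\psi_n}$ on the other), and the compatibility of the two extensions forces $\deg\psi_n \leq \deg h_n$.

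For (b), I would argue by a one-parameter deformation. The family $f_s := (1-s)f + sh$, $s\in\C$, has constant top-degree part $h$, so every $f_s$ is transversal at infinity; the uniform transversality at infinity ensures that $(s,c) \mapsto f_s^{-1}(c)$ is a smooth locally trivial fibration over the complement of the algebraic subvariety $\Sigma \subset \C \times \C$ of pairs $(s,c)$ for which $c$ is a critical value of $f_s$. Connecting a generic fibre of $f$ to a generic fibre of $h$ (which is $F_h$) by an arc in $\C^2 \setminus \Sigma$ then yields a diffeomorphism $F \cong F_h$, hence in particular an isomorphism of $\Z$-homology.

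For (c), with $\mu=0$ the estimate from (a) degenerates to $\deg h_n \leq \deg\delta_n$, and combined with $\delta_n \mid h_n$ from \cite[Theorem 4.7]{LM2} this forces $\delta_n = h_n$, so $H_n(\U^c) \cong H_n(F_h)$. Since $h_i = \delta_i$ already for $i < n$ (same reference), we get $H_i(\U^c) \cong H_i(F_h)$ for all $i \leq n$; both groups vanish for $i > n$ (for $\U^c$ via $\Gamma^\mu = 0$, for $F_h$ by affineness). The natural map $F_h \to \U^c$ coming from the link at infinity is thus a $\Z$-homology equivalence and, by a Whitehead-type argument on the simply-connected pieces (checking $n=0$ by hand), a homotopy equivalence. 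Part (b) gives $F \simeq F_h$. Finally, $\widetilde{F}$ is the total space of the $d$-fold cyclic cover of $\CP \setminus V$ attached to the homogenization $\widetilde{f}$; its restriction to $\U$ factors through the infinite cyclic cover $\U^c \to \U$, and under $\mu = 0$ this factorization becomes a homology (hence homotopy) equivalence, giving $\widetilde{F} \simeq \U^c$.

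The main obstacle is the precise bookkeeping behind $\deg\varphi_1 \leq d\mu$ in (a): one must control the defect of $H_n(F_h) \to H_n(\U^c)$ via a spectral-sequence or pair argument for the infinite cyclic covers, incorporating both the factor $d$ from the cyclic covering at infinity and the factor $\mu$ from the free rank of $H_{n+1}(\U^c)$. Once this estimate is in place, the other inequality in (a) and the arguments in (b) and (c) reduce to diagram chases and formal homotopy-theoretic manipulations.
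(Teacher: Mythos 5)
Your proposal has genuine gaps in part (a), which is the core of the proposition. First, your route to $\deg\varphi_1\leq d\mu$ starts from the claim that transversality of $V$ to $H$ forces $V\cap H$ to be smooth, so that $\{h=0\}$ has an isolated singularity and $F_h\simeq\vee S^n$ with $\dim H_n(F_h)=(d-1)^{n+1}$. This is false in the generality of the proposition: stratified transversality only requires each stratum of $V$ to meet $H$ transversally, and when $F_0$ has non-isolated singularities whose closure meets $H$, the hyperplane section $V\cap H$ is singular and $F_h$ is not a bouquet of spheres (the paper only invokes smoothness of $V\cap H$ in Corollary \ref{cor1}, under the isolated-singularity hypothesis). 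Second, even granting a good model for $F_h$, the mechanism producing the bound $d\cdot\mu$ is left unproved -- you acknowledge this is ``the main obstacle.'' The paper's actual argument is different: by \cite[Proposition 9]{DP}, the Milnor fibre $\widetilde{F}_t\simeq\widetilde{F}$ of the homogenization is obtained, up to homotopy, from \emph{either} $F_h$ \emph{or} the generic fibre $F_t$ by attaching exactly $d\cdot\mu$ cells of dimension $n+1$; combined with $H_n(\U^c)\cong H_n(\widetilde{F})$ from \cite[Corollary 6.5]{Liu}, this gives $\dim H_n(F_h)-\dim H_n(\U^c)\leq d\mu$ directly. Third, your proposed derivation of $\deg\varphi_2\leq\deg\varphi_1$ from comparing the short exact sequences (\ref{35}) and (\ref{36}) is vacuous: both sequences compute the order of the \emph{same} module $H^{n+1}(V\cup H;\R)$, yielding $\psi_n\cdot\overline{h_n}=h_n\cdot\overline{\psi_n}$, which by Remark \ref{sr} is the tautology $\psi_n h_n=h_n\psi_n$ and carries no inequality. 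The paper imports the needed input from elsewhere, namely $\dim H^n_c(F_0,\psi_f\Q)\leq\dim H_n(F_t)$ from \cite[Theorem 1.2]{Liu}.

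For part (b) you propose a one-parameter deformation $f_s=(1-s)f+sh$ and a local triviality argument; this is a genuinely different route from the paper, but as written it needs justification that transversality at infinity (hence local triviality of the fibration near $H$) persists for all $s$, which does not follow merely from the top-degree part being constant. The paper instead reads (b) off the same cell-attachment picture: $H_i(F_h)\cong H_i(\widetilde{F})\cong H_i(F)$ for $i\leq n-1$, plus freeness of $H_n$ and $\chi(F_h)=\chi(F)$. Your part (c) is essentially the paper's argument (no cells attached when $\mu=0$, then Hurewicz/Whitehead applied to the $(n+1)$-equivalence $\U^c\to\widetilde{F}$), but it depends on the unestablished part (a).
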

\begin{proof} 
\item[(a)]  Let $\widetilde{F}_{t}$ be the Milnor fibre of $\widetilde{f}$ defined by $\lbrace \widetilde{f}=t\rbrace$, for small enough $t \in \C^{\ast}$. Clearly, $\widetilde{F}_{t}$ is homotopy equivalent to $\widetilde{F}$.
Without loss of generality, $t$ can be chosen so that  $F_{t}=f^{-1}(t)$ is the generic fibre of $f$, hence $F_{t}$ is smooth.  Since $V$ intersects $H$ transversally,  the hyperplane $\lbrace x_{0}=0 \rbrace$ in $\C^{n+2}$ and its parallel hyperplane $\lbrace x_{0}=1 \rbrace$ are both generic for $\widetilde{F}_{t}$ in the sense of \cite{DP}. It follows that, up to homotopy, $\widetilde{F}_{t}$ is obtained from either the Milnor fibre $F_h$ of $h=f_d$, or from the generic fibre $F_{t}$, by attaching $d \cdot \mu$ cells of dimension $n+1$ (\cite[Proposition 9]{DP}). On the other hand, Corollary 6.5 in \cite{Liu} shows that there exists a natural map from $\U^{c}$ to ${\widetilde{F}}$, which induces an $(n+1)$-homotopy equivalence, and in particular, we have that $H_{n}(\U^{c})\cong H_{n}({\widetilde{F}})$. These two facts together yield that  $\deg\varphi_{1} \leq d\cdot \mu$. Note also that by Theorem 1.2 in \cite{Liu} we have  that $ \dim H^{n}_{c}(F_{0},\psi_{f}\Q)\leq \dim H_{n}(F_{t})$. Therefore, $\deg\varphi_{2}\leq \deg \varphi_{1} (\leq d\cdot \mu)$, so $\deg\varphi = \deg\varphi_{1}+\deg\varphi_{2} \leq 2d\cdot \mu$.

\item[(b)] The above homotopy argument yields the following isomorphisms for  $i\leq n-1$:
$$H_{i}(F_{h},\Z)\cong H_{i}(\widetilde{F},\Z)\cong  H_{i}(F,\Z).$$
Since $F_{h}$ and $F$ are $n$-dimensional affine varieties, both of them have the homotopy type of a finite $n$-dimensional CW complex. So $H_{n}(F_{h},\Z)$ and $H_{n}(F,\Z)$ are free Abelian groups, and it remains to show that they have the same rank. This is indeed true since the above discussion shows that $\chi(F_{h})= \chi(F)$.

\item[({c})]   If $\mu=0$, then $F_{h}$ and $F$ are homotopy equivalent to $\widetilde{F}$. In particular, the natural map from $\U^{c}$ to $\widetilde{F}$ induces isomorphisms on all homology groups with $\Z$-coefficients. Since this map is already an $(n+1)$-homotopy equivalence, it follows by Hurewicz's theorem that  $\U^{c}$ is homotopy equivalent to $\widetilde{F}$.
 \end{proof}
\br  In fact, we proved the following two equalities:
 \be
   d \cdot \mu - \deg\varphi_{1} =\dim H_{n+1}({\widetilde{F}}),
   \ee
\be 
\dim H_{n}(F)-\dim H^{n}_{c}(F_{0},\psi_{f}\Q)=\deg\varphi_{1}-\deg\varphi_{2}.
\ee
\er


\section{Reidemeister torsion and Alexander polynomials}\label{Rtors}

\subsection{Reidemeister torsion of chain complexes}
In this section, we recall the definition and main results about the Reidemeister torsion, for more details see \cite{KL} and \cite{CF}.

Let $C_{\ast}$ be a finite chain complex: \begin{center}
$\xymatrix{
C_{\ast}=C_{n}\ar[r]^{\partial} & \dots \ar[r]^{\partial} & C_{0}
}$
\end{center} with $C_{i}$ finite dimensional $\F$-vector spaces, and $\partial \circ \partial=0$. Choose a basis $c_{i}$ for $C_{i}$, $\overline{h_{i}}$ a basis for the homology $H_{i}(C_{\ast})$ and $h_{i}$ a lift of $\overline{h_{i}}$ to $C_{i}$. Let $b_{i}$ be a basis for $B_{i}:=$ Image$\xymatrix{
(C_{i+1} \ar[r]^{\partial} & C_{i})
}$ and $\widehat{b}_{i}$ a lift of $b_{i}$ in $C_{i+1}$. If $Z_{i}$ denotes the $i$-cycles, by using the inclusions $B_{i}\subseteq Z_{i} \subseteq C_{i}$ together with the isomorphisms $Z_{i}/ B_{i}\cong H_{i}(C_{\ast})$ and $C_{i}/ Z_{i}\cong B_{i-1}$, it follows that $b_{i} h_{i} \widehat{b}_{i-1}$  is a basis of $C_{i}$. Denote by $[a\vert b]$ the determinant of the transition matrix from the basis $a$ to the basis b.

\begin{definition}(\cite{M3}) The {\it torsion} of $(C_{\ast};c,h)$ is defined as \begin{equation}
\tau(C_{\ast};c,h)=\prod_{ i=0}^{n}  [b_{i}h_{i}\widehat{b}_{i-1}\vert c_{i}]^{(-1)^{i}} \in \F^{\ast} / \lbrace \pm 1\rbrace .
\end{equation} 
\end{definition}

The torsion does not depend on the choice of basis $b$ and its lifts. It depends on the choice of $c$ and $h$ as follows: \begin{equation}
\tau(C_{\ast};c^{\prime},h^{\prime})=\tau(C_{\ast};c,h)\prod_{i} (\dfrac{[h_{i}^{\prime} \vert h_{i}]}{[c_{i}^{\prime} \vert c_{i}]}) ^{(-1)^{i}}.
\end{equation} 


 \subsection{Torsion and Alexander polynomials}

Let $X$ be a finite connected CW complex, with $\pi=\pi_{1}(X)$. Fix an epimorphism $\rho : \pi \rightarrow \Z$ and note that $\rho$ extends naturally to an epimorphism of algebras $\Z[\pi] \rightarrow \Z[\Z]$ which we also denote by $\rho$. We identity $\Z[\Z]$ with the Laurent polynomial ring $\Z[t,t^{-1}]$. 
If $\widetilde{X} \to X$ is the universal cover of $X$, then the cellular chain complex $C_*(\widetilde{X};\Q)$ is a $\Q[\pi]$-module, freely generated by the lifts of the cells of $X$. Consider the chain complex of the pair $(X,\rho)$ defined as the complex of $\Q[t,t^{-1}]$-modules
$$C_*^{\rho}(X;\Q[t,t^{-1}]):=\Q[t,t^{-1}] \otimes_{\Q[\pi]} C_*(\widetilde{X};\Q).$$
Let $X^{c}$ denote the infinite cyclic cover defined by the kernel of $\rho$. Then under the action of the deck group $\Z$, the chain complex $C_{\ast}(X^{c},\Q)$ becomes a complex of $\Gamma:=\Q[t,t^{-1}]$-modules, which is canonically isomorphic to $C_{\ast}^{\rho}(X; \Q[t,t^{-1}])$.

Denote by $\Q(t)$ the fraction field of $\Q[t,t^{-1}]$ and define 
$$C^{\rho}_{\ast}(X,\Q(t))=C_{\ast}(X^{c},\Q)\otimes_{\Q[t,t^{-1}]}\Q(t).$$
The $i$-th homology of $(X,\rho)$ (also called the {\it $i$-th Alexander module}) is defined to be the $\Q[t,t^{-1}]$-module
$$H^{\rho}_{i}(X,\Q[t,t^{-1}]):=H_i(C_*^{\rho}(X;\Q[t,t^{-1}]))\cong H_{i}(X^{c},\Q),$$
and we extend the definition to $H^{\rho}_{i}(X,\Q(t)):=H_{i}(C^{\rho}_{\ast}(X,\Q(t)))$. Since $\Q[t,t^{-1}]$ is a principal ideal domain and $\Q(t)$ is flat over $\Q[t,t^{-1}]$, it follows that \begin{center}
$H^{\rho}_{i}(X,\Q(t))=H^{\rho}_{i}(X,\Q[t,t^{-1}])\otimes \Q(t)$
\end{center}
Note that the complex $C_{\ast}^{\rho}(X; \Q(t))$ is $\Q(t))$-acyclic if $H^{\rho}_{i}(X,\Q[t,t^{-1}])$ is a torsion $\Gamma$-module for all $i$.

We now define the Reidemeister torsion for the pair $(X,\rho)$. For this, we first note that the complex $C_{\ast}^{\rho}(X; \Q(t))$ is based by construction. 
\begin{definition} Fix a basis for the homology $H^{\rho}_{\ast}(X,\Q(t))$. The {\it  Reidemeister torsion} of $(X,\rho)$ with respect to this basis is defined as 
$$\tau_{\rho}(X)=\tau (C^{\rho}_{\ast}(X,\Q(t))) \in \Q(t)^{\ast}.$$
\end{definition}

We next indicate a basis-free definition for the Reidemeister torsion.
Since $\Q[t,t^{-1}]$ is a PID, any $\Q[t,t^{-1}]$-module $M$ has a decomposition into a direct sum of cyclic modules. Recall that the order of $M$ is defined as the product of the generators of the torsion part.  If the module $M$ is free, the order is 1 by convention. 

\begin{definition} The {\it $i$-th Alexander polynomial} $\delta^{\rho}_{i}(X)$ of $(X,\rho)$ is defined to be the order of $H^{\rho}_{i}(X,\Q[t,t^{-1}])$. 
\end{definition}

The torsion of $(X,\rho)$ can be computed in terms of the Alexander polynomials as follows:
\begin{thm} \label{5.5} (\cite[Theorem 3.4]{KL}) Let $\tau_{\rho}(X)$ be the torsion of $(X,\rho)$ with respect to some basis in homology. Then, up to multiplication by by $ct^{k}$ ($c\in \C^{\ast}$ and $k\in \Z$), we have that \begin{equation}
\tau_{\rho}(X)=\prod _{i} \dfrac{\delta^{\rho}_{2i+1}(X)}{\delta^{\rho}_{2i}(X)}.
\end{equation}
\end{thm}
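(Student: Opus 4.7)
The plan is to reduce the statement to a purely algebraic computation on a based complex of $\Gamma$-modules, where $\Gamma = \Q[t,t^{-1}]$, and then identify the torsion with a product of orders of cyclic modules. The torsion $\tau_\rho(X)$ depends only on the based complex $C_*^\rho(X,\Q(t))$, with basis coming from lifts of the cells of $X$ to the universal cover, together with the chosen basis of $H^\rho_*(X,\Q(t))$. Since $\Gamma$ is a PID, the structure theorem decomposes each Alexander module as $H^\rho_i(X,\Gamma) \cong \Gamma^{r_i} \oplus \bigoplus_j \Gamma/(p_{ij})$, with $\delta^\rho_i(X) = \prod_j p_{ij}$ up to units, and $H^\rho_i(X,\Q(t)) \cong \Q(t)^{r_i}$.

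Next, I would introduce a model complex $M_*$ of based $\Gamma$-modules, built as a direct sum of the elementary pieces: (a) $\Gamma^{r_i}$ placed in degree $i$ with zero differential, corresponding to the free parts, and (b) for each cyclic torsion summand $\Gamma/(p_{ij})$, the two-term complex $\Gamma \xrightarrow{\cdot p_{ij}} \Gamma$ placed in degrees $(i+1,i)$, equipped with the standard basis $(1)$ in each $\Gamma$-summand. By construction $M_*$ has the same homology modules as $C_*^\rho(X,\Gamma)$. Using the standard algebraic machinery of simple homotopy equivalence of based complexes over a PID (in the spirit of Milnor's \emph{Whitehead torsion} paper), one shows that $C_*^\rho(X,\Gamma)$ and $M_*$ are equivalent up to stabilization by elementary acyclic complexes $0 \to \Gamma \xrightarrow{\mathrm{id}} \Gamma \to 0$, which contribute trivially to the torsion. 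Hence $\tau_\rho(X)$ equals the torsion of $M_*\otimes_\Gamma \Q(t)$ with a matching basis, up to multiplication by $\pm ct^k$.

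Finally, I would compute the torsion of $M_*\otimes_\Gamma\Q(t)$ directly, using the multiplicativity of torsion over direct sums of based complexes. Each free summand $\Gamma^{r_i}$ in degree $i$ contributes only a unit $ct^k$ once a matching $\Q(t)$-basis for $H^\rho_i$ is fixed, and is absorbed into the ambiguity. For an elementary two-term piece $\Gamma \xrightarrow{\cdot p} \Gamma$ in degrees $(i+1,i)$, the complex becomes acyclic after tensoring with $\Q(t)$; a direct calculation with the definition of torsion (taking $\widehat{b}_i=(1)$ as a lift of the boundary basis $b_i=(p)$) yields a contribution of $p^{(-1)^{i+1}}$. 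Multiplying all contributions produces
$$\tau_\rho(X) = \prod_i \delta^\rho_i(X)^{(-1)^{i+1}} = \prod_i \frac{\delta^\rho_{2i+1}(X)}{\delta^\rho_{2i}(X)} \pmod{\pm ct^k}.$$
The main obstacle is the passage from $C_*^\rho(X,\Gamma)$ to the model $M_*$: this requires a careful normal-form argument for based chain complexes over a PID that tracks the effect on torsion under each algebraic move. Once that is in place, the rest of the argument reduces to the elementary multiplicative bookkeeping sketched above.
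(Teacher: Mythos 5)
Your argument is essentially correct, and it is the standard proof of this fact; the paper itself offers no proof but simply cites \cite[Theorem 3.4]{KL}, whose argument (going back to Milnor's \emph{Infinite cyclic coverings} and Turaev) is exactly the reduction you describe. Two remarks. First, you can avoid the simple-homotopy/stabilization machinery entirely: since $\Gamma=\Q[t,t^{-1}]$ is a PID and the $C_i$ are free, $B_{i-1}$ and $Z_i$ are free, the sequences $0\to Z_i\to C_i\to B_{i-1}\to 0$ split, and one may choose a basis of $Z_i$ adapted to $B_i$ (stacked bases); in the resulting basis the complex literally decomposes as the direct sum of your elementary pieces, and the new basis differs from the cell basis by a matrix in $GL(\Gamma)$ whose determinant is a unit $ct^k$ --- precisely the allowed indeterminacy. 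This makes the ``main obstacle'' you identify a one-line application of the structure theorem. Second, be careful with two convention-sensitive points: (i) the free summands contribute only a unit provided the homology basis of $H_i^{\rho}(X,\Q(t))$ is taken to come from a $\Gamma$-basis of the free part of $H_i^{\rho}(X,\Gamma)$ (for an arbitrary $\Q(t)$-basis the formula changes by the base-change determinant, which need not be of the form $ct^k$); and (ii) your claim that the piece $\Gamma\xrightarrow{\,\cdot p\,}\Gamma$ in degrees $(i+1,i)$ contributes $p^{(-1)^{i+1}}$ depends on the sign convention in the definition of torsion --- with the exponent $(-1)^i$ as literally written in Definition 5.1 of this paper one gets $p^{(-1)^{i}}$ and hence the reciprocal formula $\prod_i\delta_i^{(-1)^i}$, so you must use the convention of \cite{KL} (exponent $(-1)^{i+1}$, or equivalently the reciprocal reading of $[a\,\vert\, b]$) for the stated identity to come out right. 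Neither point invalidates your approach; they just need to be pinned down.
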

So we can regard the right hand side of equation (5.3) as a basis-free definition of the Reidemeister torsion.


\subsection{Duality and intersection forms}\label{int}

Let $X$ be a smooth compact $2m$-dimensional manifold with boundary $\partial X$. Then it is known that $X$ has a PL structure and any two PL-triangulations have a common linear subdivision which is PL. Endow $X$ with the CW decomposition induced by one of these. Since $X$ is compact, the associated CW complex is finite. Note that $\partial X$ inherits the structure of a PL-manifold from $X$, and the triangulations of $X$ can be used to define the chain complex $C^{\rho}_{\ast}(X,\partial X,\Q(t))$. In this setting one can construct non-singular Poincar\'{e} Duality pairings for $0\leq i \leq 2m$: 
\begin{equation}\label{5.4}
H^{\rho}_{i}(X,\Q(t))\times H^{\rho}_{2m-i}(X,\partial X,\Q(t)) \rightarrow \Q(t)
\end{equation}
The {\it intersection form} of $(X,\rho)$ is the sesquilinear form: \begin{equation}
\phi^{\rho}:H^{\rho}_{m}(X,\Q(t))\times H^{\rho}_{m}(X,\Q(t)) \rightarrow \Q(t)
\end{equation} defined by the composition \begin{center}
$H^{\rho}_{m}(X,\Q(t))\times H^{\rho}_{m}(X,\Q(t)) \rightarrow H^{\rho}_{m}(X,\Q(t))\times H^{\rho}_{m}(X,\partial X,\Q(t)) \rightarrow \Q(t)$
\end{center} where the first map is induced by inclusion and the second map is the pairing (\ref{5.4}).

Let $\overline{\ast}$ denote the canonical involution on $\Q[t,t^{-1}]$. For each $i$, and for a fixed basis on $H^{\rho}_{i}(X,\Q(t))$, we choose the dual basis on $H^{\rho}_{2m-i}(X,\partial X,\Q(t))$ given by (\ref{5.4}). Then we have that (\cite{KL,M2}) \begin{equation}
\tau_{\rho}(X,\partial X)\cdot \overline{\tau_{\rho}(X)}=1
\end{equation}

The following result will be useful later:
\begin{lem} \label{5.6}(\cite{CF}, \cite{KL}) For any $(X^{2m},\rho)$ as above, such that $X$ has the homotopy type of a m-dimensional CW complex and $C_{\ast}^{\rho}(\partial X,\Q(t))$ is acyclic, the following holds: \begin{equation}
\tau_{\rho}(\partial X)=\tau_{\rho}(X)\cdot \overline {\tau_{\rho}(X)}\cdot [\det(\phi^{\rho})]^{(-1)^{m}}.
\end{equation}
\end{lem}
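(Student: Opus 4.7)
The plan is to derive the formula by combining three ingredients: the multiplicativity of Reidemeister torsion along the short exact sequence of based $\Q(t)$-chain complexes of the pair, the duality identity $\tau_{\rho}(X,\partial X)\cdot \overline{\tau_{\rho}(X)}=1$ recorded immediately before the lemma, and the definition of the intersection form $\phi^{\rho}$ via the Poincaré pairing (\ref{5.4}). Applying the multiplicativity formula to
\begin{equation*}
0 \to C^{\rho}_{\ast}(\partial X,\Q(t)) \to C^{\rho}_{\ast}(X,\Q(t)) \to C^{\rho}_{\ast}(X,\partial X,\Q(t)) \to 0
\end{equation*}
yields a relation of the shape $\tau_{\rho}(X) = \tau_{\rho}(\partial X)\cdot \tau_{\rho}(X,\partial X)\cdot \tau(\mathcal{H})^{\pm 1}$, where $\mathcal{H}$ is the long exact $\Q(t)$-homology sequence of the pair, viewed as an acyclic based complex. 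Substituting $\tau_{\rho}(X,\partial X) = \overline{\tau_{\rho}(X)}^{-1}$ reduces the task to computing $\tau(\mathcal{H})$ in terms of $\det(\phi^{\rho})$.

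First I would use the acyclicity hypothesis $H^{\rho}_{\ast}(\partial X,\Q(t))=0$ to collapse the long exact sequence into a family of inclusion-induced isomorphisms $\iota_{\ast}\colon H^{\rho}_i(X,\Q(t)) \to H^{\rho}_i(X,\partial X,\Q(t))$. The hypothesis that $X$ has the homotopy type of an $m$-dimensional CW complex forces $H^{\rho}_i(X,\Q(t))=0$ for $i>m$, and the non-singularity of the Poincaré-Lefschetz pairing (\ref{5.4}) for the pair then forces the vanishing for $i<m$ as well. Consequently all $\Q(t)$-homology is concentrated in middle degree $m$, and $\mathcal{H}$ reduces to a single isomorphism $\iota_{\ast}\colon H^{\rho}_m(X,\Q(t)) \to H^{\rho}_m(X,\partial X,\Q(t))$ viewed as a two-term acyclic complex.

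Next I would choose a basis $\mathbf{e}$ of $H^{\rho}_m(X,\Q(t))$ and take as basis of $H^{\rho}_m(X,\partial X,\Q(t))$ the dual basis $\mathbf{e}^{\vee}$ with respect to the pairing (\ref{5.4})---the very convention used to define $\phi^{\rho}$ and to validate the duality identity. With these compatible bases, $\tau(\mathcal{H})$ is, by unwinding the definition of the torsion of an acyclic complex, the signed determinant of the transition matrix from $\iota_{\ast}(\mathbf{e})$ to $\mathbf{e}^{\vee}$. But by the very definition of $\phi^{\rho}$ as the composition of $\iota_{\ast}$ with the Poincaré pairing, the matrix of $\phi^{\rho}$ in the basis $\mathbf{e}$ is exactly this transition matrix, so $\tau(\mathcal{H})$ equals $\det(\phi^{\rho})$ up to a power of $-1$. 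Combining multiplicativity, the duality substitution, and this identification then produces the asserted formula modulo the usual $\pm t^k$ indeterminacy.

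The main technical obstacle will be bookkeeping signs and exponents: tracking the alternating $(-1)^i$-factors in the definition of the torsion of an acyclic two-term complex to pin down the precise exponent $(-1)^m$ on $\det(\phi^{\rho})$, and verifying that the transition matrix is $\phi^{\rho}$ itself rather than its transpose or its conjugate $\overline{\phi^{\rho}}$, requires a careful choice of the lifts $\widehat{b}_i$ used in the general definition of torsion. A secondary subtlety is that $\tau_{\rho}(\partial X)$ is genuinely basis-free thanks to acyclicity, whereas $\tau_{\rho}(X)$ and $\tau_{\rho}(X,\partial X)$ depend on bases in middle-degree homology; ensuring that the dual-basis convention is propagated consistently through all three factors on the right-hand side is exactly what allows $\phi^{\rho}$ to appear.
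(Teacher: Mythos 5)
The paper states this lemma without proof, citing \cite{CF} and \cite{KL}, and your argument is exactly the standard derivation given in those references (Milnor's duality-theorem argument): multiplicativity of torsion over the based short exact sequence of the pair, substitution of $\tau_{\rho}(X,\partial X)=\overline{\tau_{\rho}(X)}^{\,-1}$ from the dual-basis duality identity, and identification of the torsion of the long exact homology sequence --- which by acyclicity of $\partial X$, the $m$-dimensionality of $X$, and nonsingularity of the pairing collapses to the single isomorphism $H^{\rho}_{m}(X,\Q(t))\to H^{\rho}_{m}(X,\partial X,\Q(t))$ --- with $\det(\phi^{\rho})^{\pm 1}$ in the dual basis. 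This is correct and is the same route the cited sources take; the sign and conjugation bookkeeping you flag is genuinely where the exponent $(-1)^{m}$ comes from, and is handled there.
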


\section{Boundary manifold of a hypersurface complement}\label{boundary}
In this section, we define the boundary manifold of a hypersurface complement, and investigate its topology. We make use of the peripheral complex to study the linking number infinite cyclic cover of the boundary manifold. In particular, by using results from Section \ref{per}, we put mixed Hodge structures on the corresponding Alexander modules associated to the boundary manifold.

Choose coordinates $[x_{0},\cdots ,x_{n+1}]$ for $\CP$, and $H=\lbrace x_{0}=0 \rbrace$. Define \begin{center}
$\theta: \CP \rightarrow \mathbb{R}_{+}$, $\theta = \dfrac{\vert \widetilde{f} \vert ^{2}\vert x_{0} \vert^{2}}{(\sum_{i=0}^{n+1} \vert x_{i} \vert ^{2})^{d+1}},$\end{center}
with $\widetilde{f}$ denoting the homogenization of $f$.
Note that $\theta$ is well-defined, it is real analytic and proper, and
$$\theta^{-1}(0)=V\cup H.$$
Since $\theta$ has only finitely many critical values, there exists a positive real number $\varepsilon$ sufficiently small such that the interval $(0,\varepsilon]$ contains no critical values. Set $$\U_{0}=\theta^{-1}\left([\varepsilon,+\infty)\right).$$ So $\U_0$ is the complement of the regular tubular neighbourhood of $V\cup H$ in $\CP$.

Moreover, $\U_{0}$ is a manifold with boundary, homotopy equivalent to $\U$ (e.g., see \cite[page 149]{D1}). 
Note that while $\U_{0}$ has the homotopy type of a finite $(n+1)$-dimensional CW complex, its boundary $\partial \U_{0}$ is a smooth compact ($2n+1$)-real dimensional manifold. The inclusion $\partial \U_{0} \hookrightarrow \U_{0}$ is an $n$-homotopy equivalence (\cite[(5.2.31)]{D1}), hence  $\pi_{i}(\partial \U_{0})=\pi_{i}(\U_{0})$ for $i<n$, and we have an epimorphism: $\pi_{n}(\partial \U_{0})\twoheadrightarrow \pi_{n}(\U_{0})$. Therefore 
$$\rho: \pi_{1}(\partial \U_{0})\twoheadrightarrow \pi_{1}(\U_{0})= \pi_{1}(\U)\overset{f_*}{\twoheadrightarrow}  \pi_{1}(\C^{\ast})=\Z$$
 is an epimorphism, which defines the infinite cyclic cover $(\partial \U_{0})^{c}$ of $\partial \U_{0}$.
 
 We refer to $\partial \U_{0}$ as the {\it boundary manifold} of the hypersurface complement $\U$. For a study of topological properties of such boundary manifolds, see \cite{CoS1,CoS2}.

\begin{prop}  \label{6.1} We have the following $\Gamma$-module isomorphisms: 
\be\label{6.1} H_{i}^{\rho}(\partial \U_{0},\Q[t,t^{-1}])\cong H_{i}((\partial \U_{0})^{c})\cong   H^{2n+1-i}(V\cup H;\R).\ee  
In particular, $H_{i}((\partial \U_{0})^{c})$ is a torsion $\Gamma$-module and $C_{\ast}^{\rho}(\partial \U_{0},\Q(t))$ is $\Q(t)$-acyclic. Moreover, the zeros of the Alexander polynomial associated to $H_{i}((\partial \U_{0})^{c})$ are roots of unity for all  $i$, and have order $d$ except for $i=n$. Finally, $H_{i}((\partial \U_{0})^{c})$ is a semi-simple $\Gamma$-module for $i\neq n$.
\end{prop}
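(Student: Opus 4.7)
The first isomorphism in (\ref{6.1}), $H_{i}^{\rho}(\partial \U_{0}, \Q[t,t^{-1}]) \cong H_{i}((\partial\U_{0})^{c})$, is the canonical identification recorded in Section \ref{Rtors}, which can be rephrased via standard local-system theory as $H_{i}((\partial \U_{0})^{c}) \cong H_{i}(\partial \U_{0}; \K|_{\partial \U_{0}})$.

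For the second isomorphism I would argue in four steps. First, since $\R$ is supported on $V\cup H$, one has $H^{i}(\CP;\R) = H^{i}(V\cup H; \R|_{V\cup H})$, and (\ref{sup}) gives $\R|_{V\cup H} \cong (Rj_{\ast}\K)|_{V\cup H}$. Second, the open regular neighborhood $N$ of $V\cup H$ satisfying $\U_{0} = \CP\setminus N$ and $\partial \U_{0} = \partial N$ deformation retracts onto $V\cup H$, so $H^{i}(V\cup H; \R|_{V\cup H}) \cong H^{i}(N; (Rj_{\ast}\K)|_{N})$. Third, since $j: \U \hookrightarrow \CP$ restricts (via a cartesian square of open inclusions) to $j': N\cap \U \hookrightarrow N$, base change yields $(Rj_{\ast}\K)|_{N} \cong Rj'_{\ast}(\K|_{N\cap\U})$, hence $H^{i}(N; (Rj_{\ast}\K)|_{N}) \cong H^{i}(N\cap \U; \K)$. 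Fourth, the regular neighborhood property makes $N\cap \U = N\setminus (V\cup H)$ deformation retract onto $\partial N = \partial \U_{0}$, so $H^{i}(N\cap \U; \K) \cong H^{i}(\partial \U_{0}; \K|_{\partial \U_{0}})$. Combining these identifications, and applying Poincar\'e duality on the closed orientable $(2n{+}1)$-real-dimensional manifold $\partial \U_{0}$ with $\Gamma$-local system coefficients, one obtains
\[
H^{2n+1-i}(V\cup H; \R) \cong H^{2n+1-i}(\partial \U_{0};\K|_{\partial \U_{0}}) \cong H_{i}(\partial \U_{0}; \K|_{\partial \U_{0}}) \cong H_{i}((\partial \U_{0})^{c})
\]
as $\Gamma$-modules, proving (\ref{6.1}).

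The remaining assertions follow from this identification via the results of Section \ref{error}. For $i < n$ (resp.\ $i > n$), Proposition \ref{p4.1} identifies $H^{2n+1-i}(\CP;\R)$ with $H_{i}(\U^{c})$ (resp.\ with $\overline{H_{2n-i}(\U^{c})}$), both of which are semi-simple torsion $\Gamma$-modules whose Alexander polynomials have roots equal to $d$-th roots of unity, by Theorem \ref{t2.2}. For $i = n$, the four-term exact sequence (\ref{4.2}) together with Theorem \ref{t4.1} show that $H^{n+1}(\CP;\R)$ is a torsion $\Gamma$-module whose Alexander polynomial $r_{n}(t) = \overline{h_{n}(t)} \cdot \psi_{n}(t)$ is a product of cyclotomic factors (cf.\ Remark \ref{sr}), so its roots are still roots of unity, though not necessarily of order $d$. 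The $\Q(t)$-acyclicity of $C_{\ast}^{\rho}(\partial \U_{0},\Q(t))$ is then immediate from the torsion-ness of each $H_{i}((\partial \U_{0})^{c})$.

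The most delicate point will be tracking the $\Gamma$-module structure consistently throughout the above chain of identifications --- ensuring that the base change, deformation retraction, and Poincar\'e duality steps are all compatible with the $\Gamma$-action carried by $\K$ --- which amounts to working systematically within the derived category $D^{b}_{c}(-,\Gamma)$ of $\Gamma$-sheaves rather than merely at the level of underlying $\Q$-vector spaces.
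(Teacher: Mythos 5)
Your argument is correct, but the way you establish the central isomorphism $H_{i}^{\rho}(\partial \U_{0},\Q[t,t^{-1}])\cong H^{2n+1-i}(V\cup H;\R)$ is genuinely different from the paper's. The paper works globally on $\CP$: it identifies $IH_{i}^{\overline{m}}(\CP,\K)$ with $H_{i}^{\rho}(\U_{0},\Q[t,t^{-1}])$ and $IH_{i}^{\overline{l}}(\CP,\K)$ with $H_{i}^{\rho}(\U_{0},\partial\U_{0};\Q[t,t^{-1}])$ (the latter via Poincar\'e--Lefschetz duality on $\U$), and then compares the long exact sequence of the pair $(\U_{0},\partial\U_{0})$ with the hypercohomology sequence of the triangle $IC_{\overline{m}}\to IC_{\overline{l}}\to\R[2n+2]$, extracting $H_{i}^{\rho}(\partial\U_{0})\cong H^{2n+1-i}(\CP;\R)$ by the five lemma. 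You instead localize near $V\cup H$: using (\ref{sup}), the rug-function neighborhood $N$, the constructible-sheaf retraction isomorphism $H^{\ast}(V\cup H;-)\cong H^{\ast}(N;-)$ (the same \cite[Lemma 8.4.7]{KP} device the paper uses in Lemma \ref{tl}), open base change for $Rj_{\ast}$, the fibration of $N\setminus(V\cup H)$ over $(0,\varepsilon)$, and finally cap product with the fundamental class of the closed oriented $(2n+1)$-manifold $\partial\U_{0}$. Both routes are valid; the paper's buys the identification essentially for free from Theorem \ref{thmax} and the already-established intersection-homology computations, while yours is more geometric and makes transparent why the hypercohomology of $\R$ ``lives on'' the boundary manifold (it is in effect the link-of-the-divisor description that the paper only records afterwards, in the remark following Theorem \ref{t6.2}). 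Your caveat about tracking the $\Gamma$-structure is well taken but unproblematic: every step you use (open base change, restriction to a fiber of the rug function, cap product with the $\Q$-fundamental class) is $\Gamma$-linear. The derivation of the remaining assertions from Theorem \ref{t2.2}, Proposition \ref{p4.1} and Remark \ref{sr} matches the paper's.
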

\begin{proof}
We have the following isomorphisms of $\Gamma$-modules (see \cite[Corollary 3.4, Lemma 3.5]{LM2}) \be\label{3}
 IH_{i}^{\overline{m}}(\CP,\K)\cong H_{i}(\U,\K)\cong H_{i}(\U_{0},\K)\cong H_{i}^{\rho}(\U_{0},\Q[t,t^{-1}]),\ee
 and 
 \be\label{4}
IH_{i}^{\overline{l}}(\CP; \K)\cong H^{BM}_{i}(\U;\K)\cong H^{\rho}_{i}(\U_{0}, \partial \U_0;\Q[t,t^{-1}]),\ee
 where $H^{BM}_{\ast}$ denotes the Borel-Moore homology, and the last isomorphism in (\ref{4}) follows by Poincar\'e duality and homotopy equivalence. So, by comparing the homology exact sequence (with $\Q[t,t^{-1}]$-coefficients) of the pair $(\U_{0},\partial \U_{0})$ with the hypercohomology long exact sequence of the distinguished triangle defining the peripheral complex,  we obtain the following isomorphism of $\Gamma$-modules:
 \be\label{5} H_{i}^{\rho}(\partial \U_{0},\Q[t,t^{-1}]) \cong H^{2n+1-i}(\CP;\R).\ee 
 And since $\R$ is supported on $V \cup H$, we obtain the isomorphisms in (\ref{6.1}).
  Moreover, as $\R$ is a $\Gamma$-torsion sheaf complex, it follows from (\ref{5}) that $H_{i}((\partial \U_{0})^{c}) \cong H_{i}^{\rho}(\partial \U_{0},\Q[t,t^{-1}])$ is a $\Gamma$-torsion module. In particular, the chain complex $C_{\ast}^{\rho}(\partial \U_{0},\Q(t))$ is $\Q(t)$-acyclic. The zeros of the corresponding Alexander polynomials are roots of unity since this is the case for the Alexander polynomials associated to the modules $H^{2n+1-i}(V\cup H;\R)$ (e.g., see \cite{LM2}).
The remaining claims follow from Theorem \ref{t2.2} and Proposition \ref{p4.1}.
 \end{proof}

The following result is a direct consequence of Proposition \ref{6.1} and Corollary \ref{3.3}.  
 \begin{cor}\label{6.2} The Alexander modules $H_{i}((\partial\U_{0})^{c} )$ of the boundary manifold $\partial\U_{0}$ are endowed with mixed Hodge structures induced from the peripheral complex, for all $i$. 
Moreover, for $i\neq n$, this mixed Hodge structure is compatible with the $\Gamma$- action, i.e.,  $t: H_{i}((\partial\U_{0})^{c})\rightarrow H_{i}((\partial\U_{0})^{c})$ is a mixed Hodge structure  homomorphism for $i\neq n$.
 \end{cor}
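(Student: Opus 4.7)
The first assertion is immediate from Corollary \ref{3.3} combined with Proposition \ref{6.1}. Since $\R[n+1]$ underlies an algebraic mixed Hodge module on $\CP$, each hypercohomology group $H^{j}(\CP;\R[n+1])$ carries a canonical MHS. Because $\R$ is supported on $V\cup H$, these agree with $H^{\ast}(V\cup H;\R)$, which Proposition \ref{6.1} identifies with $H_{\ast}((\partial\U_{0})^{c})$ as $\Gamma$-modules, thereby transporting the MHS to the Alexander modules of the boundary manifold.

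For the $\Gamma$-compatibility when $i\neq n$, the plan is to identify the $t$-action on $\R$ with a monodromy operator whose semi-simple part, by Saito's formalism, already acts as a morphism of mixed Hodge modules. By Theorem \ref{new}, we have $\R|_{V}\cong Ri_{v\ast}\psi^{S}_{f}\C$ and $\R|_{H}\cong Ri_{h\ast}\K(h)[-1]$, so the $t$-action on $\R$ is built (via the extension used in the proof of Corollary \ref{3.3}) from the nearby-cycle monodromy $T$ on $\psi_{f}\Q$ and from the deck-transformation action on $\K(h)\cong Rp_{\ast}\Q_{F_{h}}$ associated to the $d$-fold cover. In both cases one has a Jordan decomposition $T=T_{s}T_{u}$ with $T_{s}$ of finite order underlying a MHM morphism, while $T_{u}=\exp(N)$ is unipotent with $N$ of Hodge type $(-1,-1)$. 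Consequently, $T_{s}$ induces a MHS morphism on $H^{\ast}(V\cup H;\R)$, whereas the unipotent contribution $T_{u}^{\ast}$ is the only potential obstruction to $t$ itself being a MHS morphism.

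The argument is then closed by invoking the semi-simplicity part of Proposition \ref{6.1}: for $i\neq n$, the $\Gamma$-module $H_{i}((\partial\U_{0})^{c})\cong H^{2n+1-i}(V\cup H;\R)$ is semi-simple, so $t$ is diagonalizable on this group. Since $t=T_{s}T_{u}$ with $T_{s}$ diagonalizable of finite order and $T_{u}$ a commuting unipotent operator, diagonalizability of $t$ forces $T_{u}=\mathrm{id}$ on this cohomology, whence $t=T_{s}$ acts as a MHS morphism, as desired. The delicate step is the input from Saito's theory ensuring that the semi-simple part of the nearby-cycle (and covering) monodromy is a morphism of mixed Hodge modules rather than merely of the underlying perverse sheaves; this is precisely what fails to help at $i=n$, where the weight-shifting unipotent correction $T_{u}^{\ast}$ may persist and prevent $t$ from preserving the Hodge and weight filtrations.
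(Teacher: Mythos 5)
Your first paragraph is exactly the paper's argument for the existence of the mixed Hodge structures, so that part is fine. For the $\Gamma$-compatibility you take a genuinely different route, and as written it has a gap at its central step. The mixed Hodge module structure on $\R[n+1]$ produced in Corollary \ref{3.3} is not canonical: it is obtained by lifting the extension class of $\R[n+1]$ through the surjection ${\rm Ext}^1(C,A)\to {\rm Ext}^1({\rm For}(C),{\rm For}(A))$, and nothing in that construction makes the chosen lift equivariant with respect to the $t$-action on the underlying $\Gamma$-sheaf complex. So even granting (as is standard in Saito's theory) that the semisimple part $T_s$ of the nearby-cycle monodromy is an automorphism of $\psi_f[-1]\Q[n+1]$ and that the deck transformation is an automorphism of $Rp_*\Q_{F_h}$, your ``consequently, $T_{s}$ induces a MHS morphism on $H^{\ast}(V\cup H;\R)$'' does not follow: one would need $T_s$ to lift to an endomorphism of the \emph{chosen} mixed Hodge module $\R[n+1]$, which is not established. (This is also why no compatibility is claimed at $i=n$: the obstruction lives precisely in the extension.) You locate the delicate point in the statement that $T_s$ is a MHM morphism on the pieces, but that part is standard; the real issue is the non-equivariance of the extension lift.

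The repair is to reduce to the pieces before invoking any monodromy decomposition, which is what the paper does. For $i\neq n$ the long exact sequence for $F_0\hookrightarrow V\cup H\hookleftarrow H$ degenerates into the isomorphisms (\ref{4.5}) and (\ref{4.6}), identifying $H^{2n+1-i}(V\cup H;\R)$ with $H^{2n+1-i}_{c}(F_{0};\psi^{S}_{f}\Q)$ for $i<n$ and with $H^{2n+1-i}(H;\R)\cong H_{i}(F_{h},\partial F_{h})$ for $i>n$. These identifications are simultaneously $\Gamma$-equivariant and morphisms of mixed Hodge structures (the restriction of the chosen MHM structure on $\R[n+1]$ to the closed stratum $V$, hence to $F_0$, recovers Saito's nearby cycle module, and similarly over $H$). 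On those pieces the $t$-action is canonically tied to the Hodge-theoretic data, and the compatibility is exactly the content of \cite{Liu} and of Alexander duality on $F_h$ --- whose proofs are indeed of the shape you describe: $T_s$ is a MHM morphism, and semi-simplicity of the $\Gamma$-module in the relevant degrees forces $T_u=\mathrm{id}$. So your mechanism is the right one, but it must be applied after the reduction to the pieces rather than to the global extension $\R$.
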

\begin{proof}  By Corollary \ref{3.3}, the peripheral complex $\R$ underlies a (shifted) mixed Hodge module. Hence the $\Q$-vector space isomorphism (underlying the  $\Gamma$-module isomorphism  of Proposition \ref{6.1}) \be
H_{i}((\partial \U_{0})^{c})\cong H^{2n+1-i}(V\cup H;\R)\ee  defines a mixed Hodge structure on $H_{i}((\partial \U_{0})^{c})$, for all $i$.

In order to prove the second claim, note that by 
 \cite{Liu}, the mixed Hodge structure on \begin{center}
 $H^{2n-i}_{c}(F_{0};\psi_{f}\C) \cong H_{i}(F_{h})$  for $i< n$
\end{center}    is compatible with the $\Gamma$-action. Then the isomorphism  (\ref{4.5}) shows that the resulting mixed Hodge structure on $H_{i}((\partial\U_{0})^{c} )$ has the same property for  $i< n$. 
 
By Alexander duality on $H_{\ast}(F_{h})$, the mixed Hodge structure on $H_{i}(F_{h},\partial F_{h})$ is compatible with the $\Gamma$-action for $i > n$. Then, by using the isomorphism (\ref{4.6}),  the resulting mixed Hodge structure on $H_{i}((\partial\U_{0})^{c} )$ satisfies the same property for $i>n$.
\end{proof} 

Our next result gives a geometric interpretation of the homology of the boundary manifold.

Let $g=x_{0}\widetilde{f}$ be the homogeneous polynomial of degree $d+1$ whose zero locus is the divisor $V\cup H$. Consider the associated Milnor fibre $F_{g}= \lbrace g=1 \rbrace$ and its boundary manifold $\partial F_{g}$. Then there exists a natural $(d+1)$-fold covering map (cf. \cite[page 149]{D1}):  \be\label{d1} \partial F_{g} \to \partial \U_{0}.\ee
\begin{prop} \label{6.9} The covering map (\ref{d1}) induces isomorphisms of $\Q$-vector spaces:
\be \label{17}
 H_{i}(\partial F_{g})\cong H_{i}(\partial \U_{0}) \text{ for    } i\neq n, n+1.
\ee 
 Moreover, if the complex numbers $\lambda_{\alpha}=\exp(\dfrac{2\pi i \alpha}{d+1})$, with $\alpha =1,2,\cdots, d$,  are not among the roots of $\psi_{n}(t)$, then the isomorphism (\ref{17}) holds for all $i$.
In particular, this is the case if  $\mu=0$ (e.g., $f$ is homogeneous).
\end{prop}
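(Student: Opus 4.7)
My plan is to compare $\partial F_{g}$ and $\partial \U_{0}$ through the common infinite cyclic cover $Y := (\partial \U_{0})^{c}$: since (\ref{d1}) is a $(d+1)$-fold covering, $Y$ is also an infinite cyclic cover of $\partial F_{g}$, with deck subgroup $(d+1)\Z \subset \Z$, so $\partial F_{g} = Y/(d+1)\Z$. The factorization $t^{d+1}-1 = (t-1)(1+t+\cdots+t^{d})$ in $\Gamma$ furnishes a morphism of short exact sequences of $\Gamma$-chain complexes
\begin{align*}
0 \to C_{*}(Y;\Q) & \xrightarrow{\,t^{d+1}-1\,} C_{*}(Y;\Q) \to C_{*}(\partial F_{g};\Q) \to 0,\\
0 \to C_{*}(Y;\Q) & \xrightarrow{\,t-1\,} C_{*}(Y;\Q) \to C_{*}(\partial \U_{0};\Q) \to 0,
\end{align*}
whose left vertical is multiplication by $1+t+\cdots+t^{d}$, middle vertical is the identity, and right vertical is the chain map induced by (\ref{d1}). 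Taking homology and applying the five-lemma reduces the proof of (\ref{17}) to showing that multiplication by $1+t+\cdots+t^{d}$ is an automorphism of both $H_{i}(Y)$ and $H_{i-1}(Y)$.

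By Proposition~\ref{6.1}, $H_{i}(Y)$ is a finitely generated torsion $\Gamma$-module whose order is a product of cyclotomic polynomials (Remark~\ref{sr}), hence admits a primary decomposition $\bigoplus_{j}\Gamma/\Phi_{m_{j}}^{k_{j}}$. A direct computation shows that multiplication by $1+t+\cdots+t^{d}=(t^{d+1}-1)/(t-1)$ is invertible on a cyclic summand $\Gamma/\Phi_{m}^{k}$ if and only if it is \emph{not} the case that $m>1$ and $m \mid d+1$: indeed, the polynomial evaluates to $d+1\neq 0$ at $t=1$ (handling $m=1$), is coprime to $\Phi_{m}$ whenever $m \nmid d+1$, and vanishes on every primitive $m$-th root of unity with $m>1$, $m \mid d+1$.

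For $i \neq n$, Proposition~\ref{6.1} forces $m_{j} \mid d$ for every cyclic summand of $H_{i}(Y)$, and since $\gcd(d,d+1)=1$ the forbidden case never arises; this establishes (\ref{17}) in the range $i \neq n, n+1$. For the refined statement, Theorem~\ref{t4.1} and Remark~\ref{sr} identify the order of $H_{n}(Y)$ as $r_{n}(t)=h_{n}(t)\psi_{n}(t)$: the roots of $h_{n}(t)$ are $d$-th roots of unity and hence harmless, while the hypothesis that no $\lambda_{\alpha}$ for $\alpha=1,\dots,d$ is a root of $\psi_{n}(t)$ rules out the forbidden case in the $\psi_{n}$-part as well, because every primitive $m$-th root of unity with $m>1$ and $m \mid d+1$ is some $\lambda_{\alpha}$. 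Finally, when $\mu=0$ the degree estimate in Proposition~\ref{4.8}(a) forces $\varphi_{2}(t)=1$, so $\psi_{n}(t)=\delta_{n}(t)$ has only $d$-th roots of unity as roots by Theorem~\ref{t2.2}; and for homogeneous $f$ the free $\C^{*}$-action on $\CN\setminus\{0\}$ restricts to a free action on $\U$, exhibiting $\U$ as a $\C^{*}$-bundle over the complement of the projective hypersurface and hence giving $\chi(\U)=0$, i.e., $\mu=0$.

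The main technical point will be the kernel/cokernel analysis at $i=n$, where $H_{n}(Y)$ is not known to be semi-simple and higher-power summands $\Gamma/\Phi_{m}^{k}$ could in principle occur; fortunately, the invertibility criterion above depends only on the prime $\Phi_{m}$ and not on the exponent $k$, so the argument goes through uniformly.
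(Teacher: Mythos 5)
Your argument is correct, and it takes a genuinely different route from the paper's. The paper works over $\C$: it decomposes $H_i(\partial F_g;\C)\cong\bigoplus_{\lambda^{d+1}=1}H_i(\partial\U_0;\K_\lambda)$ into homology with rank-one local system coefficients, computes each summand via the Wang sequence as a count $N(\lambda,i)+N(\lambda,i-1)$ of Jordan blocks of the $t$-action on $H_*((\partial\U_0)^c;\C)$, and then kills every $\lambda\neq 1$ using Proposition \ref{6.1} (for $i\neq n$) and the short exact sequence (\ref{35}) (for $i=n$). You instead compare the two Milnor sequences of the common infinite cyclic cover $Y=(\partial\U_0)^c$ over $\partial F_g$ and over $\partial\U_0$ and run the five lemma, which reduces everything to the invertibility of $1+t+\cdots+t^d$ on $H_*(Y)$; your cyclotomic criterion for invertibility on $\Gamma/\Phi_m^k$ is correct and, as you note, independent of the exponent $k$, so no semisimplicity is needed at $i=n$. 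Both proofs ultimately rest on the same arithmetic input ($\gcd(d,d+1)=1$ for $i\neq n$; the hypothesis on $\psi_n$, via $r_n=h_n\cdot\psi_n$, for $i=n$), but your version has the advantage of working over $\Q$ and of exhibiting the isomorphism as literally induced by the covering map at the chain level, whereas the paper's argument is a dimension count that additionally yields the finer eigenspace decomposition of $H_*(\partial F_g;\C)$. Your handling of $\mu=0$ (forcing $\deg\varphi_2=0$, hence $\psi_n=\delta_n$ up to a unit, with only $d$-th roots of unity as roots) replaces the paper's appeal to Proposition 4.2 of \cite{Liu} and is equally valid. The only step you (like the paper) leave implicit is that the $(d+1)$-fold cover (\ref{d1}) is classified by $\rho$ reduced modulo $d+1$; this holds because the Hopf-type covering sends every meridian of $V\cup H$ to $1$, while $\rho$ sends the meridian of $H$ to $-d\equiv 1\pmod{d+1}$.
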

\begin{proof} Denote by $N(\lambda,i)$ the number of direct summands in the $(t-\lambda)$-torsion part of $H_{i}((\partial \U_{0})^{c}; \C)$, i.e., the number of the Jordan blocks with eigenvalue $\lambda$ for the automorphism on $H_{i}((\partial \U_{0})^{c}; \C)$ induced by the $\Gamma$-action. Define a rank one local system $\K_{\lambda}$ on $\partial\U_{0}$ by the composed map: $\pi_{1}(\partial\U_{0})\overset{\rho}{\to} \Z\rightarrow \C^{\ast}$, where the last map is defined by $1_{\Z}\mapsto \lambda$.   If $\lambda=1$, then $\K_{\lambda}=\C$.
The Wang exact sequence 
$$\cdots \to H_{i}((\partial \U_{0})^{c}; \C) \overset{t - \lambda}{\to} H_{i}((\partial \U_{0})^{c}; \C) \to H_{i}(\partial \U_{0}; \K_{\lambda}) \to H_{i-1}((\partial \U_{0})^{c}; \C) \to \cdots $$ yields that (e.g., see \cite[Theorem 4.2]{DN})
 \be \label{43}
\dim H_{i}(\partial \U_{0}; \K_{\lambda})=N(\lambda,i)+N(\lambda,i-1),
\ee   
for all $i$. On the other hand, by using the $(d+1)$-fold covering map (\ref{d1}), 
we have  that  
\be H_{i}(\partial F_{g};\C)\cong \oplus_{\lambda^{d+1}=1} H_{i}(\partial \U_{0}; \K_{\lambda}) .\ee
If $\lambda^{d}\neq 1$, then it follows from Proposition \ref{6.1} that $N(\lambda,i)=0$ for $i\neq n$. (Note that $\gcd (d, d+1)=1$.)  So by using (\ref{43}), we get the isomorphism (\ref{17}) for $ i\neq n, n+1$.

Moreover, if the complex numbers $\lambda_{\alpha}=\exp(\dfrac{2\pi i \alpha}{d+1})$, with $\alpha =1,2,\cdots, d$,  are not among the roots of $\psi_{n}(t)$, then  the short exact sequence (\ref{35}) shows that  $N(\lambda,n)=0$ for $\lambda\in \{\lambda_{\alpha} | \ \alpha =1,2,\cdots, d\}$. So, in view of  (\ref{43}),  the isomorphism (\ref{17}) holds also for $i=n, n+1$.
In particular, this is the case when $\mu=0$, since by Proposition 4.2 in \cite{Liu} it follows that $\psi_{n}(t)=h_{n}(t)$ has only roots of unity of order $d$.\end{proof}

\br  The natural inclusion $\partial \U_{0} \hookrightarrow \U_{0}$ is an $n$-homotopy equivalence, and so is the inclusion $\partial F_{g} \hookrightarrow F_{g}$ (see \cite[(3.2.4)]{D2}). Then Proposition \ref{6.9} yields that
$H_{i}(F_{g})\cong H_{i}( \U)$ for $i< n$ (compare with \cite[Theorem 1.4]{DL}). 
\er
\bex  Let $V\cup H$ be the hypersurface in $\CP$ defined by $g=x_{0}x_{1}\cdots x_{n+1}$.  Then both $\partial F_{g}$ and $\partial \U_{0}$ are homotopy equivalent to $S^{n}\times (S^{1})^{n+1}$ (see \cite[(5.2.29)]{D1}).
\eex


\section{Alexander Polynomial estimates via Reidemeister torsion}\label{Alexid}
In this section, we refine the error estimates for Alexander polynomials given in Section \ref{error} by making use of Reidemeister torsion and the intersection form.

Proposition \ref{6.1} of the previous section can be used to prove the following refinement of Theorem \ref{t4.1}:

\begin{thm}\label{t6.2}  Assume that the degree $d$ polynomial $f: \CN\rightarrow \C$ is transversal at infinity. Let $\phi^{\rho}$ be the intersection form for $(\U_{0},\partial \U_{0})$ associated to $\rho$. Then, with the notations from Section \ref{error}, we have:
\be\label{6}
\det(\phi^{\rho})=\varphi(t),\ee
 and
\be\label{7}
h_{n}(t)\cdot \psi_{n}(t)= \delta_{n}^{2}(t)\cdot \det(\phi^{\rho}).\ee
Moreover, $\deg(\det(\phi^{\rho}))=\deg\varphi(t) \leq 2 d \cdot \mu$, where $\mu =\vert\chi(\U) \vert$.
\end{thm}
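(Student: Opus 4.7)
The plan is to apply the Reidemeister torsion duality formula (Lemma~\ref{5.6}) to the pair $(X,\partial X)=(\U_0,\partial\U_0)$. Here $\U_0$ is a manifold of real dimension $2(n+1)$ (so $m=n+1$), has the homotopy type of a finite $(n+1)$-dimensional CW complex (being homotopy equivalent to $\U$), and $C_*^{\rho}(\partial\U_0,\Q(t))$ is $\Q(t)$-acyclic by Proposition~\ref{6.1}, which shows $H_i((\partial\U_0)^c)$ is $\Gamma$-torsion for all $i$. The lemma then yields
\begin{equation*}
\tau_{\rho}(\partial\U_0)\;=\;\tau_{\rho}(\U_0)\cdot\overline{\tau_{\rho}(\U_0)}\cdot[\det(\phi^{\rho})]^{(-1)^{n+1}},
\end{equation*}
and the strategy is to compute both torsions independently via Theorem~\ref{5.5} and extract $\det(\phi^{\rho})$.

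By Theorem~\ref{5.5}, $\tau_{\rho}(\U_0)=\prod_{i=0}^{n}\delta_i(t)^{(-1)^{i+1}}$, since $\delta_{n+1}(t)=1$ by the freeness of $H_{n+1}(\U^c)$ (the order of a free module is $1$ by convention) and $\delta_i=1$ for $i>n+1$. For the boundary manifold, Proposition~\ref{6.1} identifies the orders $r_i(t)$ of $H_i((\partial\U_0)^c)\cong H^{2n+1-i}(V\cup H;\R)$, and Corollary~\ref{c4.2} supplies the values $r_i=\delta_i$ for $i<n$, $r_n=\delta_n\overline{\delta_n}\,\varphi$, and $r_i=\overline{\delta_{2n-i}}$ for $n<i\le 2n$, while $r_{2n+1}=1$ for dimension reasons (the infinite cyclic cover of $\partial\U_0$ is an open $(2n+1)$-manifold). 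Reindexing the terms with $n<i\le 2n$ by $j=2n-i$ and observing that $(-1)^{i+1}=(-1)^{j+1}$, these contribute $\overline{\tau_{\rho}(\U_0)}\cdot\overline{\delta_n}^{(-1)^{n}}$, while the terms with $i<n$ contribute $\tau_{\rho}(\U_0)\cdot\delta_n^{(-1)^{n}}$; the $i=n$ term contributes $(\delta_n\overline{\delta_n}\,\varphi)^{(-1)^{n+1}}$. Since $(-1)^n+(-1)^{n+1}=0$, the powers of $\delta_n$ and $\overline{\delta_n}$ cancel cleanly, leaving
\begin{equation*}
\tau_{\rho}(\partial\U_0)\;=\;\tau_{\rho}(\U_0)\cdot\overline{\tau_{\rho}(\U_0)}\cdot\varphi(t)^{(-1)^{n+1}}.
\end{equation*}
Comparing with Lemma~\ref{5.6} yields equation~(\ref{6}): $\det(\phi^{\rho})=\varphi(t)$ in $\Q(t)^{\ast}$ modulo units.

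For identity~(\ref{7}), invoke Remark~\ref{sr}: each of $\delta_n,h_n,\psi_n$ is a product of cyclotomic polynomials and hence palindromic, so the bar involution fixes each up to a unit. Theorem~\ref{t4.1} then reads $r_n(t)=h_n(t)\,\psi_n(t)$ and $\delta_n(t)\,\overline{\delta_n(t)}=\delta_n(t)^2$ modulo units; substituting into the definition $\varphi=r_n/(\delta_n\overline{\delta_n})$ together with~(\ref{6}) yields $h_n(t)\,\psi_n(t)=\delta_n(t)^2\cdot\det(\phi^{\rho})$. The degree bound $\deg\det(\phi^{\rho})=\deg\varphi\le 2d\mu$ is then immediate from Proposition~\ref{4.8}(a). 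The principal obstacle is purely combinatorial: tracking the parities $(-1)^{i+1}$ and the action of the involution $\overline{\,\cdot\,}$ when reindexing the $i>n$ half of $\tau_{\rho}(\partial\U_0)$; once the reindexing is aligned, the two $\delta_n$ powers coming from $i<n$ and $i>n$ telescope exactly against the middle $(\delta_n\overline{\delta_n})^{(-1)^{n+1}}$ factor, and the identity drops out algebraically.
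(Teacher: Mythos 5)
Your proof is correct and follows essentially the same route as the paper: apply Lemma \ref{5.6} to $(\U_0,\partial\U_0)$, compute $\tau_{\rho}(\U_0)$ and $\tau_{\rho}(\partial\U_0)$ via Theorem \ref{5.5} together with Proposition \ref{6.1} and Corollary \ref{c4.2}, cancel the $\delta_i$-factors to isolate $r_n=\delta_n\overline{\delta_n}\det(\phi^{\rho})$, and then combine with Theorem \ref{t4.1}, Remark \ref{sr}, and Proposition \ref{4.8}. The reindexing bookkeeping you carry out explicitly (including $\delta_{n+1}=1$ and $r_{2n+1}=1$) is exactly what the paper's comparison of its equations (\ref{9})--(\ref{11}) amounts to.
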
 

\begin{proof}
By Theorem \ref{5.5}, we have:
\be\label{8}\tau_{\rho}(\U_{0})=\tau_{\rho}(\U)=\prod _{i=0}^{n} \delta_{i}(t)^{(-1)^{i+1}}.
\ee
Since $\U_{0}$ has the homotopy type of a finite (n+1)-dimensional CW complex, and the complex $C_{\ast}^{\rho}(\partial \U_{0},\Q(t))$ is $\Q(t)$-acyclic, Lemma \ref{5.6}  yields the following Alexander polynomial identity:
\be\label{9}
\tau_{\rho}(\partial \U_{0} )=\prod _{i=0}^{n} \lbrace\delta_{i}(t)\cdot \overline{\delta_{i}(t)}\rbrace^{(-1)^{i+1}}\cdot [\det(\phi^{\rho})]^{(-1)^{n+1}}.\ee
On the other hand, by using Theorem \ref{5.5} and Proposition \ref{6.1}, we have: \be\label{10} \tau_{\rho}(\partial \U_{0} )= \prod _{i=0}^{2n} r_{i}(t)^{(-1)^{i+1}}.\ee
Recall that, by equation (\ref{4.3}), the polynomials $r_i(t)$ and $\delta_i(t)$ are related by:
 \begin{equation}\label{11}
 r_{i}(t)=\left\{ \begin{array}{ll}
\delta_{i}(t), & i<n, \\
\overline{ \delta_{2n-i}(t)}, & i>n.\\
\end{array}\right.
 \end{equation}
So, by plugging (\ref{10}) and (\ref{11}) in formula (\ref{9}), we obtain:
\be\label{12} r_{n}(t)= \delta_{n}(t) \cdot \overline{\delta_{n}(t)}\cdot \det(\phi^{\rho}).\ee
Therefore, \be \det(\phi^{\rho})=\frac{r_n(t)}{ \delta_{n}(t) \cdot \overline{\delta_{n}(t)}}=\varphi(t),\ee
and, by using Theorem \ref{t4.1} and Remark \ref{sr}, we get the identity:
\be h_{n}(t)\cdot \psi_{n}(t)= \delta_{n}^{2}(t)\cdot \det(\phi^{\rho}).\ee
The degree estimate $\deg(\det(\phi^{\rho}))=\deg\varphi \leq 2 d\cdot \mu$ follows from Proposition \ref{4.8}.
\end{proof}

\begin{remark}
Since $V$ intersects $H$ transversally, we can take the (closed) regular neighborhood $N$ of $V\cup H$ to be the union of a regular neighborhood of $V$, say $N(V)$, with a tubular neighborhood of the hyperplane at infinity (after rounding corners). Then \begin{equation}\partial \U_{0} = \left(S^{2n+1}_{R} \setminus (S^{2n+1}_{R}\cap N^{\circ}(V))\right) \cup \left(B_{R} \cap \partial N(V)\right),\end{equation} where $B_{R}$ is a closed large ball of radius $R$ in $\CN$ with boundary sphere $S^{2n+1}_{R}$.
In \cite[Proposition 4.9]{LM2} it is shown that the infinite cyclic cover of $S^{2n+1}_{R} \setminus (S^{2n+1}_{R}\cap N(V))$ is homotopy equivalent to the Milnor fiber $F_{h}$. 
Moreover, if 
 $\left(B_{R} \cap \partial N(V)\right)^{c}$ denotes the corresponding infinite cyclic cover of $B_{R} \cap \partial N(V)$, it follows as in Lemma \ref{tl} that  \begin{equation}
 H_{i}(\left(B_{R} \cap \partial N(V)\right)^{c})\cong H_{c}^{2n+1-i}(F_{0}, \psi_{f}^{S}\Q), \end{equation} for all $i$.
These two facts together give a geometric interpretation of Theorem \ref{t6.2}, which is also consistent with the proof of \cite[Theorem 5.6]{CF}.
\end{remark}

\begin{remark}
 If $\mu=0$, then the chain complex $C_{\ast}^{\rho}(\U_{0},\Q(t))$ is $\Q(t)$-acyclic, thus the intersection pairing of Section \ref{int} is trivial. In this case, we have:  $\det(\phi^{\rho})=1$. This fact, coupled with the previous theorem,  gives another proof of the result obtained in  (\cite{Liu}), asserting that: \be
\mu=0 \Rightarrow \varphi=\varphi_{1}=\varphi_{2}=1.\ee
\end{remark}

\begin{example} If $\mu=0$ (e.g., $ f$ is homogeneous), then  $\delta_{i}(t)=h_{i}(t)$ for all $i$. Then, it is known that (cf. \cite[ (4.1.21)]{D1}) 
\be \prod^{n}_{i=0}h_{i}(t)^{(-1)^{i+1}}=(t^{d}-1)^{-\chi(F_{h})/d}.\ee
So,
\be \tau_{\rho}(\partial \U_{0})=(t^{d}-1)^{-2\chi(F_{h})/d}.\ee
\end{example}

As an application of Theorem \ref{t6.2}, we have the following:
\begin{cor}\label{cor1} Assume that the polynomial $f: \CN\rightarrow \C$ is transversal at infinity, and the hypersurface $F_{0}$ has only isolated singularities. Then we have the following equality:
\begin{equation}\label{13}
(t-1)^{\mu+(-1)^{n+1}}(t^{d}-1)^{\xi}\cdot \prod_{p\in Sing(F_{0})}\Delta_{p}(t)= \delta_{n}(t)^{2} \cdot det(\phi^{\rho}),
\end{equation}
where $ \Delta_{p}(t)$ is the (top) local Alexander polynomial associated to the singular point $p\in Sing(F_{0})$, and $\xi=\dfrac{(d-1)^{n+1}+(-1)^{n}}{d}$.  In particular, $\deg(det(\phi^{\rho}))=\deg\varphi$ is even.
\end{cor}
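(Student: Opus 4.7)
My plan is to specialize Theorem \ref{t6.2}, which already gives $h_n(t)\psi_n(t) = \delta_n^2(t)\det(\phi^\rho)$, by computing both $h_n(t)$ and $\psi_n(t)$ explicitly under the isolated singularity hypothesis, and then matching the resulting identity against the statement.

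First I would compute $h_n(t)$. The transversality hypothesis forces the projective hypersurface $V \cap H = \{h=0\} \subset H \cong \mathbb{CP}^n$ to be smooth, so $h = f_d$ has a single isolated singularity at the origin with Milnor number $(d-1)^{n+1}$. Consequently $F_h \simeq \bigvee_{(d-1)^{n+1}} S^n$, so only $h_0(t) = t-1$ and $h_n(t)$ are nontrivial among the $h_i(t)$. Plugging into the classical product formula $\prod_i h_i(t)^{(-1)^{i+1}} = (t^d-1)^{-\chi(F_h)/d}$ recalled in the example preceding the corollary, and using $\chi(F_h) = 1 + (-1)^n(d-1)^{n+1}$, an arithmetic check (yielding $\chi(F_h)/d = (-1)^n \xi$) gives
$$h_n(t) = (t-1)^{(-1)^{n+1}}(t^d-1)^\xi.$$

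Next I would compute $\psi_n(t)$ via the standard distinguished triangle $\Q_{F_0} \to \psi_f \Q \to \phi_f \Q \overset{[1]}{\to}$ on $F_0$ involving the vanishing cycle complex. Because $F_0$ has only isolated singularities, $\phi_f \Q \cong \bigoplus_{p \in \mathrm{Sing}(F_0)} (i_p)_{\ast} \tilde H^n(F_{f,p})[-n]$, whose compactly supported hypercohomology is concentrated in degree $n$ and carries monodromy with characteristic polynomial $\prod_p \Delta_p(t)$. Combining the long exact sequence in compactly supported hypercohomology with the homotopy equivalence $F_0 \simeq \bigvee_\mu S^n$ (which determines $H^{\ast}_c(F_0, \Q)$ and shows that the constant-sheaf contribution carries only trivial monodromy) should yield
$$\psi_n(t) = (t-1)^\mu \prod_{p \in \mathrm{Sing}(F_0)} \Delta_p(t).$$
Substituting the two expressions into Theorem \ref{t6.2} and collecting $(t-1)$-exponents produces the desired polynomial identity. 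For the evenness of $\deg\det(\phi^\rho)$, I would observe that every factor on the left-hand side --- namely $t-1$, $t^d-1$, and each local Alexander polynomial $\Delta_p(t)$ --- is a product of cyclotomic polynomials and hence fixed by the involution $t \mapsto t^{-1}$; the same is true of $\delta_n(t)^2$. Therefore $\det(\phi^\rho)$ is a self-conjugate element of $\Q[t,t^{-1}]$, and any non-zero self-conjugate Laurent polynomial has its highest and lowest non-zero exponents negatives of each other, forcing the total degree to be even.

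The hard part will be the derivation of $\psi_n(t) = (t-1)^\mu \prod_p \Delta_p(t)$. Pinning down the precise exponent $\mu$ of $(t-1)$ requires a careful analysis of the compactly supported hypercohomology of the nearby cycle complex on the affine, possibly singular variety $F_0$, combining Artin-type vanishing for constructible sheaves on $F_0$ with the bouquet description to rule out extra $(t-1)$ factors or cancellations in the connecting homomorphisms of the long exact sequence.
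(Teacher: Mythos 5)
Your derivation of the main identity follows the paper's route: specialize Theorem \ref{t6.2} and substitute explicit formulas for $h_n(t)$ and $\psi_n(t)$. Your computation of $h_n(t)$ via $\prod_i h_i(t)^{(-1)^{i+1}}=(t^d-1)^{-\chi(F_h)/d}$ is correct and recovers the formula the paper cites from \cite[(4.1.23)]{D1}; for $\psi_n(t)=(t-1)^{\mu}\prod_p\Delta_p(t)$ the paper simply cites \cite[Section 5.2.1]{Liu}, and your nearby/vanishing cycle sketch is the right mechanism, although (as you yourself note) the identification of the exact exponent $\mu$ is left open in your write-up.

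The genuine gap is in the parity claim. Your argument runs: the left-hand side and $\delta_n(t)^2$ are products of cyclotomic polynomials, hence fixed by $t\mapsto t^{-1}$; hence $\det(\phi^{\rho})$ is self-conjugate; hence its highest and lowest exponents are negatives of each other and its total degree is even. But cyclotomic polynomials --- and more generally orders of torsion $\Gamma$-modules, and $\det(\phi^{\rho})$ itself, which depends on a choice of basis --- are fixed by the involution only \emph{up to units} $\pm t^k$: for instance $\overline{t-1}=-t^{-1}(t-1)$ and $\overline{t+1}=t^{-1}(t+1)$. Self-conjugacy up to units does not constrain the parity of the total degree: $t+1$ and $t-1$ are fixed up to units and have total degree $1$. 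So the last step of your argument fails, and nothing in your proof excludes, say, an odd power of $t+1$ dividing $\det(\phi^{\rho})$. The paper establishes evenness by a different mechanism: since $F_0$ has only isolated singularities, $V\cap H$ is smooth, and comparing Euler characteristics via \cite[Corollary (5.4.4)]{D1} gives $\chi(F_h)=\chi(F_0,\psi_f\Q)$, hence $\deg h_n(t)=\deg \psi_n(t)$, i.e. $\deg\varphi_1=\deg\varphi_2$; therefore $\deg\det(\phi^{\rho})=\deg\varphi=\deg\varphi_1+\deg\varphi_2=2\deg\varphi_1$ is even. To close your gap you would need either this Euler characteristic argument or a proof that $\det(\phi^{\rho})$ is \emph{exactly} (not up to units) $(\pm)$-self-conjugate, e.g. from the Hermitian symmetry of the intersection form, which your proposal does not supply.
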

\begin{proof}  We only need to compute the polynomials $\psi_{n}(t)$ and $h_{n}(t)$. For the case of isolated singularities, Section 5.2.1 in \cite{Liu} gives the following equality:
\be\label{14}
\psi_{n}(t)= (t-1)^{\mu}\prod_{p\in Sing(F_{0})}\Delta_{p}(t),\ee
while \cite[(4.1.23)]{D1} provides another equality:
\be\label{15} h_{n}(t)= (t-1)^{(-1)^{n+1}}(t^{d}-1)^{\xi},\ee
 where $\xi=\dfrac{(d-1)^{n+1}+(-1)^{n}}{d}$. Then (\ref{13}) follows from Theorem \ref{t6.2}.

Since $F_0$ has only isolated singularities, $V\cap H$ is a smooth hypersurface in $H$. Then $\lbrace h+x_{0}^{d}=0\rbrace$ is a smooth degree $d$ hypersurface in $\CP$.  Corollary (5.4.4) in \cite{D1} shows that $\chi(F_{0},\psi_{f}\Q)+\chi(H\cap V)$ equals the Euler characteristic number of any smooth degree $d$ hypersurface in $\CP$, so, in particular,
\be \chi(F_{0},\psi_{f})+\chi(H\cap V)=\chi(\lbrace h+x_{0}^{d}=0\rbrace).\ee
Note  also that
\be \chi(F_{h})+\chi(H\cap V)=\chi(\lbrace h+x_{0}^{d}=0\rbrace).\ee
By the last two identities, we get  $\chi(F_{h})=\chi(F_{0},\psi_{f}\Q)$, which shows that $\deg\varphi_{1}=\deg\varphi_{2}$. So $\deg\varphi=\deg\varphi_{1}+\deg\varphi_{2}\equiv 0$ (mod 2).
\end{proof}

\begin{remark} \label{67}
$(a)$ When $n=1$, Corollary \ref{cor1} is also a consequence of \cite[Corollary 5.8]{CF}. \\
$(b)$ It would be interesting to see if the above property of (the degree of determinant of) the intersection pairing remains valid if $f$ has arbitrary singularities. \\
$(c)$ For the case of isolated singularities, $\mu$ is given by the formula: \be \label{20}
\mu=(d-1)^{n+1}-\sum_{p\in Sing(F_{0})} \mu_{p},
\ee where $\mu_{p}$ is the Milnor number of $f$ at $p$ (see \cite{DP}). Then the degree estimates of Theorem \ref{t6.2}, together with Corollary \ref{cor1},  yield that:
$$ 2(d-1)^{n+1}=2\deg \delta_{n}(t)+\deg (\det(\phi^{\rho})) \leq 2\deg \delta_{n}(t)+2d\cdot \mu.$$
Therefore, \be \deg \delta_{n}(t) \geq (d-1)^{n+1}-d\cdot \mu.\ee In particular, we obtain a non-vanishing result for $H_{n}(\U^{c})$ for small $\mu$. Such examples (with $\mu=1, 2$) are given in \cite{Huh}.
\end{remark}

\bex If $F_{0}$ is smooth, then $\delta_{n}(t)=1$ (see \cite[Lemma 1.5]{L3}) and $\mu=(d-1)^{n+1}$. So, by Corollary \ref{cor1}, we conclude that $$\det(\phi^{\rho})=(t-1)^{(d-1)^{n+1} +(-1)^{n+1}}(t^{d}-1)^{\xi},$$ where $\xi=\dfrac{(d-1)^{n+1}+(-1)^{n}}{d}.$
\eex

\bex In relation to Remark \ref{67}({c}), consider the hypersurface in $\mathbb{CP} ^{3}$ defined by  $V= \lbrace x_{0}x_{1}x_{2}+x_{3}^{3}=0\rbrace $.  Then $V$ has only isolated singularities, and it is known that $\mu=2$ for any generic hyperplane at infinity (see \cite[Conjecture 20]{Huh}).  So, $\deg \delta_{2}(t) \geq (3-1)^{3}-3 \cdot 2=2$. 
\eex

\bex Consider the hypersurface in $\mathbb{CP} ^{2}$ defined by  $V= \lbrace x_{0}x_{1}^{3}+x_{1}^{4}+x_{2}^{4}=0\rbrace $. The singular locus of $V$ is just one point $p=[1, 0, 0]$. Let $H=\lbrace x_{0}=0 \rbrace$ be the hyperplane at infinity.  Note that  $V \cap H$ is a smooth hypersurface in $H$, hence $V$ is transversal to $H$. The link pair of the point $p$ in $(\mathbb{CP} ^{2}, V )$ is obtained by intersecting the affine variety $x_{1}^{3}+x_{1}^{4}+x_{2}^{4} = 0$ in $\C^{2}$ with a small sphere about the origin. Since we work in a neighborhood of the origin, by an analytic change of coordinates, this is same as the link pair of the origin in the variety $x_{1}^{3}+x_{2}^{4} = 0$. The polynomial $x_{1}^{3}+x_{2}^{4}$ is weighted homogeneous with weighted degree $12$ for the weight $(4,3)$, and the characteristic polynomial of the monodromy homeomorphism of the associated Milnor fibration is $(t^{4}-t^{2}+1)(t^{2}-t+1)$. So the link of the singular point is a rational homology sphere, which in turn yields that $F_{0}$ is a rational homology manifold. In particular, $\delta_{1}(1)\neq 0$ (e.g., see\cite[Corollary 5.4]{Liu}). Also note that the local Alexander polynomial of the link of the singularity has prime divisors none of which divides $t^{4}-1$. Thus, they cannot be among the prime divisors of $\delta_{1}(t)$ (see \cite{L3}), hence $\delta_{1}(t)=1$.  Formula (\ref{20}) yields that $\mu=3$. By Corollary \ref{cor1}, we conclude that $$\det(\phi^{\rho})=(t-1)^{4}(t^{4}-1)^{2}(t^{4}-t^{2}+1)(t^{2}-t+1).$$
Note also that $\gcd(12,5)=1$, so it follows from by Proposition \ref{6.9} that $\partial F_{g}$ and $\partial \U_{0}$ have same rational homology groups, where $g=x_{0}(x_{0}x_{1}^{3}+x_{1}^{4}+x_{2}^{4})$.
\eex


\end{document}